\documentclass[journal,twoside,web]{ieeecolor}
\usepackage{generic}
\usepackage{cite}
\usepackage{amsmath,amssymb,amsfonts}
\usepackage{hyperref}%\href
\usepackage[ruled,vlined,linesnumbered]{algorithm2e}
\usepackage{algpseudocode}
\usepackage{graphicx}
\usepackage{textcomp}
\def\BibTeX{{\rm B\kern-.05em{\sc i\kern-.025em b}\kern-.08em
    T\kern-.1667em\lower.7ex\hbox{E}\kern-.125emX}}
%%%%%%%%%% ARXIV BELOW %%%%%%%%%%%%%%
% \markboth{Bonalli \MakeLowercase{\textit{et al.}}}
% {Bonalli \MakeLowercase{\textit{et al.}}}
%%%%%%%%% ARXIV ABOVE %%%%%%%%%%%%%%
%%%%%%%%% FINAL BELOW %%%%%%%%%%%%%%
% \markboth{\journalname, VOL. XX, NO. XX, XXXX 2020}
% {Bonalli \MakeLowercase{\textit{et al.}}: Preparation for \journalname (December 2020)}

\newtheorem{rmk}{Remark}[section]
\newtheorem{thm}{Theorem}[section]
\newtheorem{lmm}{Lemma}[section]

%%%%% TABLES
\usepackage{array}
\usepackage{tabularx}
\newcolumntype{C}{ >{\centering} m{15cm} }
% \newcolumntype{C}{>{\centering\arraybackslash}p{19mm}}
\newcolumntype{G}{>{\centering\arraybackslash}p{4mm}}
\newcolumntype{S}{>{\centering\arraybackslash\scriptsize}p{4mm}}

\newcommand\Tstrut{\rule{0pt}{2.6ex}}         % = `top' strut
\newcommand\Bstrut{\rule[-0.9ex]{0pt}{0pt}}   % = `bottom' strut
\usepackage{multirow}

\newcommand{\Unif}{\textrm{Unif}}

% footnote with no number

% Special itemize for assumptions

\usepackage{enumitem}
\newenvironment{Itemize}{\begin{itemize}[left=3ex]}{\end{itemize}}

\definecolor{lightblue}{rgb}{0.36, 0.54, 0.7}

\newcommand{\revThird}[1]{\textcolor{black}{#1}}
\newcommand{\revFourth}[1]{\textcolor{black}{#1}}

\begin{document}
\title{Analysis of Theoretical and Numerical Properties of Sequential Convex Programming for Continuous-Time Optimal Control}
\author{Riccardo Bonalli, Thomas Lew, and Marco Pavone
%
% \thanks{This paper was submitted for review in March 2022.}
\thanks{This work was supported in part by NASA under the Early Stage Innovations program, by the National Science Foundation under the CPS program (grant \#1931815), and by the King Abdulaziz City for Science and Technology (KACST).}
\thanks{R. Bonalli is with the Laboratory of Signals and Systems (L2S), Universit\'e Paris-Saclay, Centre National de la Recherche Scientifique (CNRS), CentraleSup\'elec, 91190 Gif-sur-Yvette, France (emails: riccardo.bonalli@centralesupelec.fr and rbonalli@stanford.edu).}
\thanks{T. Lew and M. Pavone are with the Department of Aeronautics \& Astronautics, Stanford University, Stanford, CA 94305-4035 USA (emails: thomas.lew@stanford.edu and pavone@stanford.edu).} %\IEEEmembership{Member, IEEE}
% \thanks{This paper was submitted for review in September 2020. This research was supported by the National Science Foundation
% under the CPS program (grant #1931815) and by the King Abdulaziz City
% for Science and Technology (KACST).}
% \thanks{The authors are with the Department of Aeronautics \& Astronautics, Stanford University, Stanford, CA 94305-4035 USA (emails: rbonalli@stanford.edu, thomas.lew@stanford.edu, and pavone@stanford.edu).}
}

\maketitle

\begin{abstract}
Sequential Convex Programming (SCP) has recently gained significant popularity as an effective method for solving optimal control problems and has been successfully applied in several different domains. However, the theoretical analysis of SCP has received comparatively limited attention, and it is often restricted to discrete-time formulations. In this paper, we present a unifying theoretical analysis of a fairly general class of SCP procedures for continuous-time optimal control problems. In addition to the derivation of convergence guarantees in a continuous-time setting, our analysis reveals two new numerical and practical insights. %enabled by SCP.
First, we show how one can more easily account for manifold-type constraints, which are a defining feature of optimal control of mechanical systems. Second, we show how our theoretical analysis can be leveraged to accelerate SCP-based optimal control methods by infusing techniques from indirect optimal control. 
%Through the years, Sequential Convex Programming (SCP) has gained great interest as an efficient tool for non-convex optimal control. Despite the large number of existing algorithmic frameworks, only a few are accompanied by rigorous convergence analysis, which are often only tailored to discrete-time problem formulations. In this paper, we present a unifying theoretical analysis of a fairly general class of SCP procedures which is applied to the original continuous-time formulation. Besides the extension of classical convergence guarantees to continuous-time settings, our analysis reveals two new features inherited by SCP-type methods. First, we show how one can more easily account for manifold-type constraints, which play a key role in the optimal control of mechanical systems. Second, we demonstrate how the theoretical analysis may be leveraged to devise an accelerated implementation of SCP based on indirect methods. Detailed numerical experiments are provided to show the key benefits of a continuous-time analysis to improve performance.

% %
% \blfootnote{%This paper was submitted for review in September 2020. 
% This research was supported by the National Science Foundation
% under the CPS program (grant \#1931815) and by the King Abdulaziz City
% for Science and Technology (KACST).}
% \blfootnote{The authors are with the Department of Aeronautics \& Astronautics, Stanford University, Stanford, CA 94305-4035 USA (emails: rbonalli@stanford.edu, thomas.lew@stanford.edu, and pavone@stanford.edu).}
\end{abstract}

\begin{IEEEkeywords}
%Trajectory optimization,
Optimal control, Nonlinear systems, Constrained control, Algebraic/geometric methods, Variational methods.
%Non-Convex Optimal Control, Sequential Convex Programming, Pontryagin Maximum Principle, Manifold-Type Constraints, Geometric Control, Indirect Methods.
\end{IEEEkeywords}

% \riccardo{See our discussion below -> diventa -> see section precisa.}

\section{Introduction}
\label{sec:introduction}
\IEEEPARstart{S}{ince} its first appearance more than five decades ago, Sequential Convex Programming (SCP) \cite{falk1969,mccormick1976} has proven to be a powerful and reliable algorithmic framework for non-convex optimization, and it has recently gained new popularity in aerospace \cite{LiuLu2014,morgan2014,MaoSzmukEtAl2016,Virgili-llopZagarisEtAl2017} and robotics \cite{AugugliaroSchoelligEtAl2012,SchulmanDuanEtAl2014,BonalliCauligiEtAl2019,BonalliBylardEtAl2019}. In its most general form, SCP entails finding a locally-optimal solution to a non-convex optimization problem as the limit point of a sequence of solutions to convex subproblems formed by successive approximations. The main advantage offered by this approach is the ability to leverage a wide spectrum of numerical techniques to efficiently solve each convex subproblem \cite{BoydVandenberghe2004,azhmyakov2008,VerscheureDemeulenaereEtAl2009,chamanbaz2015}, leading to near-real-time numerical schemes. For example, among the most mature SCP paradigms we find the well-known Sequential Quadratic Programming (SQP) method \cite{betts1993,sideris2005,NocedalWright2006}.

Through the years, SCP's sound performance has pushed the community towards deep investigations of the theoretical nature of this method. The most %relevant and
informative result states that when convergence is achieved, SCP finds a candidate local optimum for the original non-convex problem, i.e., a solution that satisfies necessary conditions for local optimality \cite{zillober2009,DinhDiehl2010,diehl2019} (convergence rate results have also been derived, see, e.g., \cite{messerer2020}). When used in the context of non-convex optimal control, the SCP convexification scheme is usually applied to the non-convex program that stems from a discretization of the original continuous-time problem, providing only partial insights with respect to the original continuous-time formulation. For instance, are those guarantees only applicable to specific discretization schemes? Can insights from continuous-time analysis be leveraged to improve SCP-based optimal control methods? To the best of our knowledge, the only continuous-time analysis of SCP-based optimal control is provided in \cite{MaoSzmukEtAl2016}, though the optimal control context considered by the authors is very specific and the conditions for optimality used are weaker than those in the state-of-the-art for continuous-time optimal control (see our discussion in Section \ref{sec_guarantees}).
%Thus a deep theoretical study of SCP when directly applied to continuous-time non-convex optimal control problems is a key missing step.

%However, to the best of our knowledge, theoretical guarantees have been obtained primarily in discrete-time settings. In particular, a first attempt of analysis in continuous time has been undertaken in \cite{MaoSzmukEtAl2016}, albeit the conditions for optimality considered by the authors are weaker than those in the state-of-the-art for continuous-time optimal control (see our discussion below). A more complete and insightful analysis of the properties of SCP in a continuous-time setting is missing. %Their study foreshadows the possibility of a broader analysis of the properties of SCP in a continuous-time setting.

\textit{Statement of contributions:} %We fill this gap by providing a unifying analysis for a wide class of SCP formulations when applied to continuous-time non-convex optimal control problems.
In this paper we contribute to filling the existing gap in the theoretical analysis of SCP-based optimal control methods by providing a unifying analysis of a wide class of SCP procedures for continuous-time (non-convex) optimal control. Our main result consists of proving that, under \revThird{mild assumptions}, any accumulation point for the sequence of solutions returned by SCP satisfies the \textit{Pontryagin Maximum Principle} (PMP) \cite{Pontryagin1987,AgrachevSachkov2004} associated with the original formulation. The PMP represents a set of necessary conditions for optimality in continuous-time optimal control that is stronger than the traditional Lagrange multiplier rules (the latter were investigated in \cite{MaoSzmukEtAl2016}), and it often represents the best result one might hope for in nonlinear optimal control. Our convergence result stems from an analysis on the continuity with respect to convexification of the Pontryagin cones of variations, tools originally introduced by Pontryagin and his group to prove the PMP. In addition, we relax some technical assumptions that are often difficult to verify in practice and that have been considered in \cite{MaoSzmukEtAl2016} (e.g., strong compactness of the set of admissible controls is replaced by weak compactness), thus enlarging the class of problems that can be solved by SCP with guarantees. 
%Pursuing this analysis under reasonable assumptions is an involved task, and was addressed by \cite{MaoSzmukEtAl2016} under too strong requirements (e.g., strong compactness of the set of admissible controls).

Our continuous-time analysis provides a generalization of several existing discrete-time results and reveals new insights into the nature of SCP applied to optimal control, ultimately offering three key advantages. First, we can transfer theoretical guarantees to \textit{any} discrete-time implementation of the continuous-time SCP-based optimal control formulation, regardless of the time-discretization scheme adopted. Second, we can directly and effectively extend these guarantees to the setting with \textit{manifold-type constraints}, i.e., nonlinear state equality constraints often found when dealing with mechanical systems. Third, we can provide a powerful connection to indirect methods for optimal control such as \textit{(indirect) shooting methods} \cite{trelat2012}, enabling the design of numerical schemes that accelerate the convergence of SCP.

Specifically, our contributions are as follows: (1) We derive theoretical guarantees for continuous-time SCP-based optimal control methods, whose related sequence of convex subproblems stems from the successive linearization of all nonlinear terms in the dynamics and all non-convex functions in the cost. In particular, we apply this analysis to finite-horizon, finite-dimensional, non-convex optimal control problems with control-affine dynamics. (2) Through a study of the continuity of the Pontryagin cones of variations with respect to linearization, we prove that whenever the sequence of SCP iterates converges (under specific topologies), we find a solution satisfying the PMP associated with the original formulation. In addition, we prove that up to some subsequence, the aforementioned sequence always has an accumulation point, which provides a weak guarantee of success for SCP (``weak'' in the sense that only a \textit{subsequence} of the sequence of SCP iterates can be proved to converge). (3) We leverage the continuous-time analysis to design a novel and efficient approach to account for manifold-type constraints. Specifically, we show that, under mild assumptions, one can solve the original formulation (i.e., with manifold-type constraints) with convergence guarantees by applying SCP to a new optimal control problem where those constraints are simply ignored, thereby simplifying numerical implementation. (4) As a byproduct, our analysis shows that the sequence of multipliers associated with the sequence of convex subproblems converges to a multiplier for the original formulation. We show via numerical experiments how this property can be used to considerably accelerate convergence rates by infusing techniques from indirect control.  

Previous versions of this work have appeared in \cite{BonalliCauligiEtAl2019,BonalliBylardEtAl2019}. In this paper, we provide as additional contributions (i) a new formulation with more general cost functionals, (ii) convergence proofs under weaker assumptions, (iii) detailed explanations on ``transferring" theoretical guarantees under time discretizations, and (iv) extensive numerical simulations for the acceleration procedure based on indirect methods.

We do highlight three main limitations of our work. First, being SCP a \textit{local optimization algorithm}, our theoretical guarantees are necessarily local (this is arguably unavoidable given the local nature of SCP). Second, the assumption of control-affine dynamics plays a crucial (though technical) role in our convergence analysis. The extension of our results to the more general setting represents an open research question. %Finally, our approach to make this problem tractable relies on the penalization of state constraints, i.e., constraints on the state variables only, within the cost (see our discussion in Section \ref{sec_probForm}). Nevertheless, this approach offers three main advantages. First, it provides a direct control on the quality of the approximation through the choice of appropriate penalization weights. Second, it establishes an explicit connection with the original problem, in the sense that if a solution which satisfies the state constraints is found, then it is automatically a feasible solution to the constrained problem. Third, extensive numerical experiments show that solutions which satisfy state constraints are usually found.
%Finally, our approach is limited to optimal control problems for which any contribution due to state constraints, i.e., constraints on the state variables only, can be penalized within the cost (see our discussion in Section \ref{sec_probForm}).

\textit{Organization}: The paper is organized as follows. Section \ref{sec_probForm} introduces notation and the continuous-time non-convex optimal control problem we wish to study. Our convergence analysis of SCP-based optimal control methods is split in two sections: In Section \ref{sec_noManif}, convergence is analyzed in the absence of manifold-type constraints, and in Section \ref{sec_manif} we account for manifold-type constraints. We show in Section \ref{sec_acceleratingProc} how our theoretical analysis can be used to design convergence acceleration procedures through numerical experiments in Section \ref{sec_numerics}. Finally, Section \ref{sec_conclusion} provides final remarks and directions for future research.

\section{Problem Formulation} \label{sec_probForm}

Our objective consists of providing locally-optimal solutions to Optimal Control Problems (OCP) of the form:
\begin{equation*}
    \begin{cases}
        \displaystyle \underset{\revThird{u \in \mathcal{U}}}{\min} \ \int^{t_f}_0 f^0(s,x(s),u(s)) \; \mathrm{d}s \triangleq \int^{t_f}_0 \bigg( G(s,u(s)) \\
        \displaystyle \quad + H(s,x(s)) + L^0(s,x(s)) + \sum^m_{i=1} u^i(s) L^i(s,x(s)) \bigg) \; \mathrm{d}s \\
        \dot{x}(s) = f(s,x(s),u(s)) \\
        \displaystyle \qquad \triangleq f_0(s,x(s)) + \sum^m_{i=1} u^i(s) f_i(s,x(s)) , \ \textnormal{a.e.} \ \revThird{s \in [0,t_f]} \\
        x(0) = x^0 , \quad g(x(t_f)) = 0 \\
        x(s) \in M \subseteq \mathbb{R}^n , \quad s \in [0,t_f]
    \end{cases}
\end{equation*}
where the variable $x$ denotes state variables, and \revThird{we optimize over controls $u \in \mathcal{U} \triangleq L^2([0,t_f];U)$, where $t_f > 0$ is some fixed final time and $L^2([0,t_f];U)$ is the space of square integrable controls defined in $[0,t_f]$ and with image in $U$, $U \subseteq \mathbb{R}^m$ being a convex compact subset}. The set $\mathcal{U}$ contains all the \textit{admissible controls}. The mappings $L^i : \mathbb{R}^{n+1} \rightarrow \mathbb{R}$, $f_i : \mathbb{R}^{n+1} \rightarrow \mathbb{R}^n$, for $i=0,\dots,m$, and $g : \mathbb{R}^n \rightarrow \mathbb{R}^{\ell_g}$ are assumed to be smooth (i.e., at least twice continuously differentiable), whereas we consider smooth mappings $G : \mathbb{R}^{m+1} \rightarrow \mathbb{R}$, $H : \mathbb{R}^{n+1} \rightarrow \mathbb{R}$ that are convex with respect to the variables $u$ and $x$, respectively. We require that the vector fields $f_i$, $i=0,\dots,m$ have compact supports \revThird{(or alternatively that $f_i$, $i=0,\dots,m$ and their first and second derivatives with respect to $x$ are bounded)}, and that 
% We require that 
$0$ is a regular value for $g$, so that $g^{-1}(0)$ is a submanifold of $\mathbb{R}^n$, and that $g(x_0) \neq 0$, so that no trivial solutions exists. In addition, we may require optimal trajectories to satisfy \textit{manifold-type constraints} of the form $x(s) \in M$, $s \in [0,t_f]$, where $M \subseteq \mathbb{R}^n$ is a smooth $d$-dimensional submanifold of $\mathbb{R}^n$. In this case, the initial condition $x^0 \in \mathbb{R}^n$ lies within $M$. In OCP, the mappings $f$ and $f^0$ model \textit{control-affine} nonlinear dynamics which are satisfied almost everywhere (a.e) in \revThird{$[0,t_f]$}, and non-convex-in-state cost, respectively. In particular, we leverage the fact that controls $u$ appear in the cost $f^0$ through either convex or linear terms only to establish convergence guarantees. 
Any (locally-optimal) solution to OCP is denoted as \revThird{$(x^*,u^*)$, where the control $u^* : [0,t_f] \rightarrow U$ is in $L^2([0,t_f];U)$ and $x^* : [0,t_f] \rightarrow \mathbb{R}^n$ is an absolutely-continuous trajectory}.

\begin{rmk}
    %Finally, we require that the vector fields $f_i$, $i=0,\dots,m$ have compact supports. This requirement 
    The requirement that the vector fields $f_i$, $i=0,\dots,m$ have compact supports is not restrictive in practice, for we may multiply the $f_i$ by some smooth cut-off function whose support is in some arbitrarily large compact set that contains states $x \in \mathbb{R}^n$ which are relevant to the given application domain. %that includes the scenario.
    Importantly, as a standard result, this property implies that the trajectory solutions to the dynamics of OCP (and to the dynamics of every other problem that will be defined later) are defined and uniformly bounded for all times \revThird{$s \in [0,t_f]$}, see Lemma \ref{lemma_boundness} for a more precise statement. From this last observation and Filippov's theorem, we infer the existence of (at least locally) optimal solutions to OCP as long as OCP is feasible (see, e.g., \cite{LeeMarkus1967}). Sufficient conditions for the feasibility of OCP exist and are related to the Lie algebra generated by $f_1,\dots,f_m$. In particular, these conditions are \textit{generic} (more details may be found in \cite{lobry1972}; see also Section \ref{sec_convexProb}). 
\end{rmk}

\begin{rmk} \label{remark_penalization}
Many applications of interest often involve state constraints $c(s,x(s)) \le 0$, $s \in [0,t_f]$, where the mapping $c : \mathbb{R}^{n+1} \rightarrow \mathbb{R}^{\ell_c}$ is smooth and non-convex. One common way of solving such constrained problems hinges on the \textit{penalization} of state constraints within the cost, thus reducing the original problem to OCP. Specifically, given a penalization weight $\omega = (\omega_1,\dots,\omega_{\ell_c}) \in [0,\omega_{\max}]^{{\ell_c}}$, one may introduce the mapping $L^0_{\omega}(s,x) \triangleq L^0(s,x) + \sum^{\ell_c}_{i=1} \omega_i h(c_i(s,x))$, where $h : \mathbb{R} \rightarrow \mathbb{R}_+$ is any continuously differentiable penalization function such that $h(z) = 0$ for $z \le 0$ (e.g., $h(z) = 0$ for $z \le 0$ and $h(z) = z^2$ for $z > 0$). The constrained problem is reduced to OCP by dropping state constraints and replacing the running cost function $L_0$ with $L^0_{\omega}$ (note that $L^0_{\omega}$ is smooth but not necessarily convex). The parameter $\omega$ is selected by the user and weighs the presence of state constraints; the higher the value, the larger the penalization for the violation of state constraints. We will use this remark for numerical experiments in Section \ref{sec_numerics}. We refer to \cite{NocedalWright2006} for the analysis of the convergence for $\omega \rightarrow \infty$ of penalty methods toward solutions of constrained optimization problems, which lies outside the scope of this work.
\end{rmk}

\revThird{OCP} is in general difficult to solve because of the presence of nonlinear dynamics and non-convex cost. The solution strategy proposed in this work is based on SCP.

\section{Sequential Convex Programming without\\ Manifold-Type Constraints} \label{sec_noManif}

As a first step, we develop our SCP framework without considering manifold-type constraints, showing later how the whole formalism can be adapted to the presence of those constraints. Dropping the manifold-type constraints, OCP takes the simpler form:
\begin{equation*}
    \begin{cases}
        \displaystyle \underset{\revThird{u \in \mathcal{U}}}{\min} \ \int^{t_f}_0 f^0(s,x(s),u(s)) \; \mathrm{d}s \\
        \dot{x}(s) = f(s,x(s),u(s)) , \ \textnormal{a.e.} \ \revThird{s \in [0,t_f]} \\
        x(0) = x^0 , \quad g(x(t_f)) = 0 .
    \end{cases}
\end{equation*}
SCP entails finding a locally-optimal solution to OCP as a limit point of a sequence of solutions to convex subproblems coming from successive approximations to OCP. Although several different approximation schemes have been introduced in the literature, in this work we focus on arguably the simplest one, which is to linearize any nonlinear term in the dynamics and any non-convex function in the cost. The two main advantages of this approach are ease of computing linearizations and the absence of high-order singular Jacobians, which can cause the SCP problem to be ill-posed (e.g., SQP requires additional procedures to ensure positive definiteness of Hessians \cite{NocedalWright2006}). %However we stress that from a theoretical point of view, if not a numerical one, the proofs in the following sections show that we can extend our analysis to more general, higher order approximations.

\subsection{Design of Convex Subproblems} \label{sec_convexProb}

Assume we are given \revThird{$(x_0,u_0)$, where $u_0 : [0,t_f] \rightarrow \mathbb{R}^m$ is piecewise continuous and $x_0 : [0,t_f] \rightarrow \mathbb{R}^n$ is absolutely continuous}. This tuple represents the initializing guess for the SCP procedure. Importantly, we do not require $(x_0,u_0)$ to be feasible for OCP, though feasibility of $(x_0,u_0)$ and closeness to a satisfactory trajectory increases the chances of rapid convergence \revThird{(as it was empirically observed, e.g., in \cite{BonalliCauligiEtAl2019})}. We will address this point further in the numerical experiment section. A sequence of \textit{convex optimal control problems} is defined by induction as follows: Given a sequence $(\Delta_k)_{k \in \mathbb{N}} \subseteq \mathbb{R}_+$, the Linearized Optimal Control subProblem (LOCP$^{\Delta}_{k+1}$) at iteration $k+1$ subject to trust-region radius $\Delta_{k+1} > 0$ is

{\small
\begin{equation*}
    \begin{cases}
        \displaystyle \underset{\revThird{u \in \mathcal{U}}}{\min} \ \int^{t_f}_0 f^0_{k+1}(s,x(s),u(s)) \; \mathrm{d}s \\
        \displaystyle \qquad \triangleq \int^{t_f}_0 \bigg( G(s,u(s)) + H(s,x(s)) + L^0(s,x_k(s)) \\
        \displaystyle \qquad \qquad \quad + \sum^m_{i=1} u^i(s) L^i(s,x_k(s)) + \bigg( \frac{\partial L^0}{\partial x}(s,x_k(s)) \\
        \displaystyle \qquad \qquad \quad + \sum^m_{i=1} u^i_k(s) \frac{\partial L^i}{\partial x}(s,x_k(s)) \bigg) ( x(s) - x_k(s) ) \bigg) \; \mathrm{d}s \\
        \dot{x}(s) = f_{k+1}(s,x(s),u(s)) , \quad x(0) = x^0 \\
        \displaystyle \quad \triangleq f_0(s,x_k(s)) + \sum^m_{i=1} u^i(s) f_i(s,x_k(s)) + \bigg( \frac{\partial f_0}{\partial x}(s,x_k(s)) \\
        \displaystyle \qquad + \sum^m_{i=1} u^i_k(s) \frac{\partial f_i}{\partial x}(s,x_k(s)) \bigg) ( x(s) - x_k(s) ) , \ \textnormal{a.e.} \ \revThird{s \in [0,t_f]} \\
        \displaystyle \revThird{g_{k+1}(x(t_f)) \triangleq g(x_k(t_f)) + \frac{\partial g}{\partial x}(x_k(t_f)) ( x(t_f) - x_k(t_f) ) = 0} \\
        \displaystyle \revThird{\int^{t_f}_0 \| x(s) - x_k(s) \|^2 \; \mathrm{d}s \le \Delta_{k+1}}
    \end{cases}
\end{equation*}}
where all the \textit{non-convex contributions} of OCP have been linearized around \revThird{$(x_k,u_k)$}, which for $k \geq 1$ is a solution to the subproblem LOCP$^{\Delta}_k$ at the previous iteration. Accordingly, \revThird{$(x_{k+1},u_{k+1})$} always denotes a solution to the subproblem LOCP$^{\Delta}_{k+1}$. Each subproblem LOCP$^{\Delta}_k$ is convex in the sense that after a discretization in time through any time-linear integration scheme (e.g., Euler schemes, trapezoidal rule, etc.), we end up with a finite-dimensional convex program that can be solved numerically via convex optimization methods. In particular, linearizations of $G$ and $H$ are not required since these mappings are already convex. Finally, we have introduced convex \textit{trust-region constraints}
\begin{equation} \label{eq_trust}
    \revThird{\int^{t_f}_0 \| x(s) - x_k(s) \|^2 \; \mathrm{d}s \le \Delta_{k+1}} .
\end{equation}
These are crucial to guiding the convergence of SCP in the presence of linearization errors. Since the control variable already appears linearly within the non-convex quantities defining OCP, trust-region constraints are not needed for control. We remark that although it might seem more natural to impose pointwise trust-region constraints at each time \revThird{$s \in [0,t_f]$}, the $L^2$-type constraints \eqref{eq_trust} are sufficient to perform a convergence analysis, and importantly, they are less restrictive. The \textit{trust-region radii} $(\Delta_k)_{k \in \mathbb{N}} \subseteq \mathbb{R}_+$ represent optimization parameters and may be updated through iterations to improve the search for a solution at each next iteration. Effective choices of such an updating rule will be discussed in the next section.

The definition of every convex subproblem by induction makes sense only if we can claim that: (1) at each step, the optimal trajectory $x_k$ is defined in the entire interval \revThird{$[0,t_f]$}, and (2) there exists (at least one) optimal solution at each step. The answer to the first question is contained in the following lemma, whose proof relies on routine application of the Gr\"onwall inequality and is postponed in the Appendix.
\begin{lmm}[Boundness of trajectories] \label{lemma_boundness}
Let $\textnormal{supp}f_i$ denote the support of $f_i$, $i=0,\dots,m$. If $U$ and $\textnormal{supp}f_i$, $i=0,\dots,m$ are compact, then each $x_k$ is defined in the entire interval \revThird{$[0,t_f]$} and uniformly bounded for every \revThird{$t \in [0,t_f]$} and $k \in \mathbb{N}$.
\end{lmm}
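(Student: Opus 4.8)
The plan is to proceed by induction on $k$, exploiting the fact that the dynamics of each LOCP$^{\Delta}_{k+1}$ is a time-varying \emph{affine} system in the state. Writing $A_k(s) \triangleq \frac{\partial f_0}{\partial x}(s,x_k(s)) + \sum_{i=1}^m u^i_k(s)\frac{\partial f_i}{\partial x}(s,x_k(s))$ and $b_k(s,u) \triangleq f_0(s,x_k(s)) + \sum_{i=1}^m u^i f_i(s,x_k(s))$, the linearized dynamics reads $\dot x(s) = A_k(s)\,x(s) + \big(b_k(s,u(s)) - A_k(s)x_k(s)\big)$. The base case $k=0$ is immediate, since $x_0$ is given as an absolutely continuous trajectory on $[0,T]$, hence defined there and bounded by continuity on a compact interval.

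First I would record the uniform coefficient bounds furnished by the hypotheses. Since each $f_i$ has compact support and is $C^1$, both $f_i$ and $\frac{\partial f_i}{\partial x}$ are globally bounded, and since $U$ is compact, $\sup_{v\in U}\|v\|=:\bar u<\infty$. Hence there exist constants $C,D$, \emph{independent of $k$, $s$, and $u$}, with $\|A_k(s)\|\le D$ and $\|b_k(s,u)\|\le C$. Assuming inductively that $x_k$ is defined and bounded on $[0,T]$, the forcing term $b_k(s,u(s))-A_k(s)x_k(s)$ lies in $L^\infty([0,T])$ (using $u\in\mathcal U$ with $U$ compact), so the affine ODE admits a unique absolutely continuous solution on all of $[0,T]$ by Carath\'eodory's theorem, and a routine Gr\"onwall estimate bounds it; this gives that $x_{k+1}$ is defined on $[0,T]$ with $\|x_{k+1}\|_\infty<\infty$, closing the induction for definedness and per-iteration boundedness.

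The delicate point is \emph{uniformity in $k$}, and this is where I expect the main obstacle to lie: naively iterating the Gr\"onwall bound above yields a recursion of the form $R_{k+1}\le \alpha + (e^{DT}-1)R_k$ for $R_k\triangleq\|x_k\|_\infty$, whose multiplier $e^{DT}-1$ can exceed $1$, so the bound may compound geometrically across iterations rather than stay bounded. To circumvent this I would avoid controlling the drift through $\|x_k\|_\infty$ and instead exploit the trust-region constraint \eqref{eq_trust} together with the $k$-independent constants $C,D$. Concretely, writing $\dot x_{k+1}(s)=b_k(s,u_{k+1}(s))+A_k(s)\big(x_{k+1}(s)-x_k(s)\big)$ gives $\|\dot x_{k+1}(s)\|\le C + D\|x_{k+1}(s)-x_k(s)\|$, and integrating from $x_{k+1}(0)=x^0$ yields $\|x_{k+1}\|_\infty \le \|x^0\| + CT + D\int_0^T\|x_{k+1}-x_k\|\,\mathrm ds$. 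By Cauchy--Schwarz and the trust region, $\int_0^T\|x_{k+1}-x_k\|\,\mathrm ds \le \sqrt{T}\,\|x_{k+1}-x_k\|_{L^2}\le \sqrt{T\Delta_{k+1}}$, hence $\|x_{k+1}\|_\infty \le \|x^0\| + CT + D\sqrt{T\Delta_{k+1}}$, a bound \emph{free of} $\|x_k\|_\infty$.

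Taking $R\triangleq \max\big(\|x_0\|_\infty,\ \|x^0\| + CT + D\sqrt{T\Delta_{\max}}\big)$ with $\Delta_{\max}\triangleq\sup_k\Delta_k$ would then bound every $x_k$ uniformly in both $t\in[0,T]$ and $k\in\mathbb N$, provided the trust-region radii are uniformly bounded (as is standard in the SCP scheme). If one only needs definedness and boundedness for each fixed $k$, the induction of the second paragraph already suffices and the trust region is not required; the extra work is precisely what upgrades the estimate to the claimed uniformity in $k$.
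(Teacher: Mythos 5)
Your proof is correct, but it takes a genuinely different route from the paper's. The paper uses neither induction nor the trust region: it writes the integral form of the linearized dynamics, splits the forcing into the terms $b_k(s,u_{k+1}(s))$, $A_k(s)x_{k+1}(s)$, and $A_k(s)x_k(s)$, and then observes that the last term---the only place where the a priori uncontrolled $x_k$ enters---is in fact uniformly bounded, because $\frac{\partial f_i}{\partial x}(s,x_k(s))$ vanishes unless $(s,x_k(s)) \in \textnormal{supp}f_i$, and on that compact set $\| x_k(s) \|$ is bounded by a constant depending only on the geometry of the support, not on $k$. A single Gr\"onwall estimate then gives $\| x_{k+1}(t) \| \le C e^{CT}$ with $C$ independent of $k$, and there is no recursion to control at all. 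Your alternative---eliminating $\|x_k\|_\infty$ by bounding $\int_0^T \| x_{k+1}-x_k \| \, \mathrm{d}s$ via Cauchy--Schwarz and the trust-region constraint \eqref{eq_trust}---is valid and arguably more elementary, and it produces an explicit bound $\|x^0\| + CT + D\sqrt{T\Delta_{k+1}}$; your diagnosis that the naive Gr\"onwall recursion compounds geometrically across iterations is also exactly the obstruction the paper's support trick circumvents. What your route costs is generality: it needs two hypotheses absent from the lemma's statement, namely that $x_{k+1}$ is feasible for LOCP$^{\Delta}_{k+1}$ (so that \eqref{eq_trust} is available) and that $\sup_k \Delta_k < \infty$, which is only guaranteed later by the requirement on \textbf{UpdateRule} in Section \ref{sec_Algo}. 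The paper's argument needs neither, and this matters downstream: the ``same exact argument'' is invoked in Section \ref{sec_proof} to establish uniform boundedness of the variation trajectories $\tilde z^{r,v}_k$ and of the adjoint arcs $p_k$, and in Section \ref{sec_probForm} for trajectories of OCP itself---none of which satisfy any trust-region constraint, so your mechanism would not transfer to those uses.
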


To answer the second question, we should provide sufficient conditions under which LOCP$^{\Delta}_{k+1}$ admits a solution for each $k \in \mathbb{N}$. To this purpose, we assume the following:
\begin{Itemize}
    \item[$(A_1)$] For every $k \in \mathbb{N}$, the subproblem LOCP$^{\Delta}_{k+1}$ is feasible.
\end{Itemize}
As a classical result, under $(A_1)$, for every $k \in \mathbb{N}$, the subproblem LOCP$^{\Delta}_{k+1}$ has an optimal solution \revThird{$(x_{k+1},u_{k+1})$}, which makes the above definition of each convex subproblem by induction well-posed (see, e.g., \cite{LeeMarkus1967}).

\begin{rmk}
    In practical contexts, $(A_1)$ is often satisfied. %In any case, t
    This assumption is well-motivated, because, up to a slight modification, each subproblem LOCP$^{\Delta}_k$ is \textit{generically} feasible in the following sense. In the presence of \eqref{eq_trust}, the feasibility of each subproblem would be a consequence of the controllability of its linear dynamics, which is in turn equivalent to the invertibility of its Gramian matrix (see, e.g., \cite{LeeMarkus1967}; the constraints \eqref{eq_trust} force any admissible trajectory of LOCP$^{\Delta}_{k+1}$ to lie within a tubular neighborhood around $x_k$, thus, as a classical result, the controllability criterion in \cite{LeeMarkus1967} applies by restriction to this tubular neighborhood). %Since the subset of invertible matrices is dense, Gramian matrices are invertible with probability one.
    \revThird{Since the subset of invertible matrices is dense, Gramian matrices are \textit{almost always} (i.e., in a topological sense) invertible.} Linearized dynamics are thus almost always controllable, which implies that each subproblem is feasible. As an important remark, feasibility is preserved through time discretization, making any time-discretized version of the convex subproblems well-posed numerically. Indeed, time discretization maps the continuous linear dynamics into a system of linear equations. Since the set of full-rank matrices is also dense, similar reasoning shows that the discretized subproblems are also almost always feasible. In conclusion, $(A_1)$ is a mild and well-justified assumption.
\end{rmk}

% Before discussing the SCP pseudo-algorithm that we introduce to sequentially solve each subproblem LOCP$^{\Delta}_k$, it is worth introducing one last class of subproblems. Specifically, we inductively define the Linearized Optimal Control subProblem (LOCP$_{k+1}$) at iteration $k+1$ as LOCP$^{\Delta}_{k+1}$ modified to remove trust-region constraints (i.e. dropping \eqref{eq_trust}). Those subproblems are optimal control problems without state constraints, and they will play a key role in developing our theoretical result of convergence. The well-posedness of each subproblem LOCP$_k$ and the existence of an optimal solution at each iteration directly come from $(A_1)$, as discussed earlier. 

\subsection{Algorithmic Framework} \label{sec_Algo}

The objective of our SCP formulation can be stated as follows: to find locally-optimal solutions to OCP by iteratively solving each subproblem LOCP$^{\Delta}_k$ until the sequence \revThird{$(\Delta_k,x_k,u_k)_{k \in \mathbb{N}}$, where $(x_k,u_k)$} is a solution to LOCP$^{\Delta}_k$, satisfies some \textit{convergence criterion} (to be defined later, \revThird{see Section \ref{sec_numerics}}). We propose pursuing this objective by adopting (pseudo-) Algorithm \ref{algo_SCP}, which is designed to return a locally-optimal solution to OCP, up to small approximation errors.

\SetKwInOut{Input}{Input}
\SetKwInOut{Output}{Output}
\SetKwInOut{Data}{Data}

\begin{algorithm}
\caption{Sequential Convex Programming} \label{algo_SCP}
\Input{Guess trajectory $x_0$ and control $u_0$.}
\Output{Solution \revThird{$(x_k,u_k)$} to LOCP$^{\Delta}_k$ for some $k$.}
\Data{Initial trust-region radius $\Delta_0 > 0$.}

\Begin
{
    $k = 0$, $\Delta_{k+1} = \Delta_k$\\
    \While{$(u_k)_{k \in \mathbb{N}}$ \textnormal{has not converged}}
    {
        Solve LOCP$^{\Delta}_{k+1}$ for \revThird{$(x_{k+1},u_{k+1})$}\\
        
        \revThird{$\Delta_{k+1} = \textbf{UpdateRule}(x_{k+1},u_{k+1},x_k,u_k)$} %according to Theorem \ref{theo_main}
        % Calculate accuracy ratio $\rho_k$ as in (\ref{eq_ratio})\\
        
        % \If{$\rho_k > \rho_b$}
        % {
        %     Reject solution $(t^{k+1}_f,x_{k+1},u_{k+1})$\\
            
        %     $\omega_{k+1} \gets \omega_k$ , \
        %     $\Delta_{k+1} \gets \beta_{\textrm{fail}}\Delta_k$ \\
        % }
        
        % \Else
        % {
        %     Accept solution $(t^{k+1}_f,x_{k+1},u_{k+1})$\\
            
        %     \begingroup
        %     \small
        %     $\omega_{k+1} \gets \begin{cases} \omega_0 & c_{k+1}(\cdot,x_{k+1}(\cdot)) \le \varepsilon\\
        %     \gamma_{fail} \omega_k & c_{k+1}(\cdot,x_{k+1}(\cdot)) > \varepsilon \end{cases}$\\
        %     $\Delta_{k+1} \gets \begin{cases} \min\{\beta_{\textrm{succ}} \Delta_k , \Delta_0\} & \rho_k<\rho_a\\ \Delta_k & \rho_k \geq \rho_a \end{cases}$\\
        %     \endgroup
        % }

        $k \gets k+1$\\

    }
    
    \Return{\revThird{$(x_{k-1},u_{k-1})$}}
}
\end{algorithm}

Algorithm \ref{algo_SCP} requires the user to provide a rule \textbf{UpdateRule} to update the values of the trust-region radius. This rule should primarily aim to prevent accepting solutions at each iteration that are misguided by significant linearization error. A priori, we only require that \textbf{UpdateRule} is such that the sequence of trust-region radii $(\Delta_k)_{k \in \mathbb{N}}$ converges to zero (in particular, $(\Delta_k)_{k \in \mathbb{N}}$ is bounded). In the next section, we show that this numerical requirement, together with other mild assumptions, are sufficient to establish convergence guarantees for Algorithm \ref{algo_SCP}. An example for \textbf{UpdateRule} will be provided in Section \ref{sec_numerics} when discussing numerical simulations.

The algorithm terminates when the sequence of controls $(u_k)_{k \in \mathbb{N}}$ converges with respect to some user-defined topology (as we will see shortly, convergence is always achieved in some specific sense by at least one subsequence of $(u_k)_{k \in \mathbb{N}}$; in Section \ref{sec_numerics}, we \revThird{propose an approximate stopping criterion to}  check the convergence of the sequence $(u_k)_{k \in \mathbb{N}}$). Whenever such convergence is achieved (in some specific sense; see the next section), we may claim Algorithm \ref{algo_SCP} has found a candidate locally-optimal solution for OCP (see Theorem \ref{theo_main} in the next section). The reason that only the convergence of the sequence of controls suffices to claim success is contained in our convergence result (see Theorem \ref{theo_main} in the next section). To measure the convergence of $(u_k)_{k \in \mathbb{N}}$, some topologies %fit
are better than others, and in particular, under mild assumptions one can prove that, up to some subsequence, $(u_k)_{k \in \mathbb{N}}$ always converges with respect to the weak topology of $L^2$. In turn, this may be interpreted as a result of weak existence of successful trajectories for Algorithm \ref{algo_SCP} when selecting the $L^2$-weak topology as convergence metric. In practice, Algorithm \ref{algo_SCP} is numerically applied to time-discretized versions of each subproblem LOCP$^{\Delta}_k$. Thus we will show that our conclusions regarding convergence behavior still hold in a discrete context, up to discretization errors (see the next section).

\subsection{Convergence Analysis} \label{sec_guarantees}

We now turn to the convergence of Algorithm \ref{algo_SCP}. Under mild assumptions, our analysis provides three key results:
\begin{enumerate}
    \item[R1] When the sequence $(u_k)_{k \in \mathbb{N}}$ returned by Algorithm \ref{algo_SCP} converges, the limit is a stationary point for OCP in the sense of the \textit{Pontryagin Maximum Principle} (PMP).
    \item[R2] There always exists a subsequence of $(u_k)_{k \in \mathbb{N}}$ that converges to a stationary point of OCP for the weak topology of $L^2$.
    \item[R3] This converging behavior transfers to time-discretization of Algorithm \ref{algo_SCP}, i.e., versions for which we adopt time-discretization of subproblems LOCP$^{\Delta}_k$.
\end{enumerate}
Result R1 is the core of our analysis and roughly states that whenever Algorithm \ref{algo_SCP} achieves convergence, a candidate locally-optimal solution to the original problem has been found. For the proof of this result, we build upon the PMP.

Before focusing on the convergence result, we recall the statement of the PMP and list our main assumptions. \revThird{For the sake of clarity, we introduce the PMP related to OCP and the PMP related to each convexified problem LOCP$^{\Delta}_k$ separately}.

\revThird{\textit{PMP related to OCP:}} For every $p \in \mathbb{R}^n$ and $p^0 \in \mathbb{R}$, define the \textit{Hamiltonian}
$$
H(s,x,p,p^0,u) = p^{\top} f(s,x,u) + p^0 f^0(s,x,u) .
$$

\begin{thm}[\revThird{PMP for OCP \cite{Pontryagin1987}}] \label{theo_PMP}
Let \revThird{$(x^*,u^*)$} be a locally-optimal solution to OCP. There exist an absolutely-continuous function \revThird{$p : [0,t_f] \rightarrow \mathbb{R}^n$} and a constant $p^0 \le 0$, such that the following hold:
\begin{itemize}
    \item Non-Triviality Condition: $(p,p^0) \neq 0$.
    \item Adjoint Equation: Almost everywhere in \revThird{$[0,t_f]$},
    $$
    \dot{p}(s) = -\frac{\partial H}{\partial x}(s,x^*(s),p(s),p^0,u^*(s)) .
    $$
    \item Maximality Condition: Almost everywhere in \revThird{$[0,t_f]$},
    \begin{align*}
        H(s,x^*(s),p(s),p^0, \ &u^*(s)) = \\
        &= \underset{v \in U}{\max} \ H(s,x^*(s),p(s),p^0,v) .
    \end{align*}
    \item Transversality Condition: It holds that
    $$
    \revThird{p(t_f) \ \perp \ \textnormal{ker} \frac{\partial g}{\partial x}(x^*(t_f))} .
    $$
\end{itemize}
\end{thm}

\vspace{5pt}

\noindent \revThird{A tuple $(x^*,p,p^0,u^*)$ satisfying Theorem \ref{theo_PMP} is called (Pontryagin) extremal for OCP. Note that, thanks to the equivalence $(\textnormal{ker} \ A)^{\perp} = \textnormal{Im} \ A^{\top}$ for a matrix $A$, the transversality condition entails that $p(t_f) = \sum^{\ell_g}_{i=1} \lambda_i \nabla g_i(x^*(t_f))$, for some $\lambda \in \mathbb{R}^{\ell_g}$.}

\revThird{\textit{PMP related to LOCP$^{\Delta}_k$:} For every $k \ge 1$, $p \in \mathbb{R}^n$, and $p^0 , p^1 \in \mathbb{R}$, define the \textit{Hamiltonian}}
\begin{align*}
    \revThird{H_k(s,x,p,p^0,p^1,u)} &\revThird{= p^{\top} f_k(s,x,u)} \\
    &\revThird{+ p^0 f^0_k(s,x,u) + p^1 \| x - x_{k-1}(s) \|^2 .}
\end{align*}

\revThird{\begin{thm}[Weak PMP for LOCP$^{\Delta}_k$ \cite{Hartl1995}] \label{theo_PMPk}
Let $k \ge 1$ and $(x_k,u_k)$ be a locally-optimal solution to LOCP$^{\Delta}_k$. There exist an absolutely-continuous function $p_k : [0,t_f] \rightarrow \mathbb{R}^n$ and two constants $p^0_k \le 0$, $p^1_k \in \mathbb{R}$, such that the following hold:
\begin{itemize}
    \item Non-Triviality Condition: $(p_k,p^0_k,p^1_k) \neq 0$.
    \item Adjoint Equation: Almost everywhere in \revThird{$[0,t_f]$},
    \begin{align*}
        \dot{p}_k(s) = -\frac{\partial H_k}{\partial x}(s,x_k(s),p_k(s),p^0_k,p^1_k,u_k(s)) .
    \end{align*}
    \item Maximality Condition: Almost everywhere in \revThird{$[0,t_f]$},
    \begin{align*}
        H_k(s, \; &x_k(s),p_k(s),p^0_k,p^1_k,u_k(s)) = \\
        &\underset{v \in U}{\max} \ H(s,x_k(s),p_k(s),p^0_k,p^1_k,v) .
    \end{align*}
    \item Transversality Condition: It holds that
    $$
    \revThird{p_k(t_f) \ \perp \ \textnormal{ker} \frac{\partial g}{\partial x}(x_k(t_f))} .
    $$
\end{itemize}
\end{thm}}

\vspace{5pt}

\noindent \revThird{A tuple $(x_k,p_k,p^0_k,p^1_k,u_k)$ satisfying Theorem \ref{theo_PMPk} is called extremal for LOCP$^{\Delta}_k$.}

\revThird{\begin{rmk} \label{remark_slack}
    Theorems \ref{theo_PMP} and \ref{theo_PMPk} provide first-order necessary conditions for optimality, thus extremals are candidate local optima. It is worth noting that, thanks to the new variable
    \begin{equation} \label{eq_changeVar}
        y(t) \triangleq \int^t_0 \| x(s) - x_k(s) \|^2 \; \mathrm{d}s ,
    \end{equation}
    constraints \eqref{eq_trust} may be written as $y(t_f) - \Delta_{k+1} \le 0$. By leveraging this transformation, LOCP$^{\Delta}_{k+1}$ may be reformulated as an optimal control problem with final inequality constraints but without state constraints. Thus, the multipliers introduced in Theorems \ref{theo_PMP} and \ref{theo_PMPk} are continuous functions of time (see our proof in Section \ref{sec_proof}; compare also with \cite[Theorem 4.1]{Hartl1995} under final inequality constraints only). Finally, although the conditions listed in Theorem \ref{theo_PMP} are essentially sharp, the statement of Theorem \ref{theo_PMPk} may be strengthened as follows. If $(x_{k+1},p_{k+1},p^0_{k+1},p^1_{k+1},u_{k+1})$ is an extremal for LOCP$^{\Delta}_{k+1}$, one can additionally prove that (see \cite{Hartl1995}; note that \cite{Hartl1995} considers maximization of rewards rather than minimization of costs, thus multipliers must change sign) $p^1_{k+1} \le 0$ and
    $$
    p^1_{k+1} \left( \int^{t_f}_0 \| x_{k+1}(s) - x_k(s) \|^2 \; \mathrm{d}s - \Delta_{k+1} \right) = 0
    $$
    (the latter is know as \textit{slack condition}), which motivates the choice ``weak PMP for LOCP$^{\Delta}_{k+1}$'' as name for Theorem \ref{theo_PMPk}. Nevertheless, since the constraints \eqref{eq_trust} do not appear in the original problem OCP, we do not need to leverage these latter additional conditions on $p^1_{k+1}$, i.e., Theorem \ref{theo_PMPk} suffices to establish convergence for SCP when applied to solve OCP. 
\end{rmk}}

\revThird{Assumption $(A_1)$ suffices to obtain the result R1} %claim 1
(see Theorem \ref{theo_main} below). To prove %claim 2,
result R2, additional regularity \revThird{on the data defining OCP} is required. Specifically, we introduce the following technical condition:
\revFourth{
\begin{Itemize}
    \item[$(A_2)$] The mapping $G : \mathbb{R}^{m+1} \rightarrow \mathbb{R}$ is $u$-strongly convex, i.e., there exists $\sigma > 0$ such that for every $s \in \mathbb{R}$ and every $u_1 , u_2 \in \mathbb{R}^m$,
    \begin{align*}
        G(s,\lambda u_1 + (1 - \lambda) &u_2) + \frac{1}{2} \sigma \lambda (1 - \lambda) \| u_1 - u_2 \|^2 \le \\
        &\le \lambda G(s,u_1) + (1 - \lambda) G(s,u_2) .
    \end{align*}
\end{Itemize}}

Our main convergence result reads as follows,
\revThird{\begin{thm}[Guarantees of convergence for SCP] \label{theo_main}
Assume that $(A_1)$ holds and that Algorithm \ref{algo_SCP} returns a sequence $(\Delta_k,u_k,x_k)_{k \in \mathbb{N}}$ such that $\Delta_k \rightarrow 0$ and, for every $k \in \mathbb{N}$, the tuple $(u_{k+1},x_{k+1})$ locally solves LOCP$^{\Delta}_{k+1}$.
\begin{enumerate}
    \item Assume that the sequence of controls $(u_k)_{k \in \mathbb{N}}$ converges to some $u^* \in \mathcal{U}$ for the strong topology of $L^2$. Let $x^* : [0,t_f] \rightarrow \mathbb{R}^n$ denote the solution to the dynamics of OCP associated with the control $u^*$. Then, there exist a sequence $(p_k,p^0_k,p^1_k)_{k \in \mathbb{N}}$ and a tuple $(p,p^0)$, with $p : [0,t_f] \rightarrow \mathbb{R}^n$ absolutely continuous, $p^0 \le 0$, such that:
    \begin{enumerate}
        \item $(x_k,p_k,p^0_k,p^1_k,u_k)$ is an extremal for LOCP$^{\Delta}_k$ and these convergence results hold:
        \begin{itemize}
            \item $x_k \rightarrow x^*$ for the strong topology of $C^0$.
            \item Up to some subsequence, $p_k \rightarrow p$ for the strong topology of $C^0$, and $p^0_k \rightarrow p^0$.
        \end{itemize}
        \item If $(p(t_f),p^0) \neq 0$, i.e., the value to which $(p_k(t_f),p^0_k)_{k \in \mathbb{N}}$ converges is not zero, then $(x^*,p,p^0,u^*)$ is an extremal for OCP.
    \end{enumerate}
    \item Assume that $(A_2)$ holds and the sequence of controls $(u_k)_{k \in \mathbb{N}}$ converges to $u^* \in \mathcal{U}$ for the weak topology of $L^2$. If $(p_k,p^0_k,p^1_k)_{k \in \mathbb{N}}$ is such that $p^0_k \neq 0$ for every $k \ge 1$, then the statements in 1.a-1.b above remains true. In addition, there always exists a subsequence $(u_{k_j})_{j \in \mathbb{N}}$ that converges to some $u^* \in \mathcal{U}$ for the weak topology of $L^2$, such that the statements in 1.a-1.b above are true.
\end{enumerate}
\end{thm}}

The guarantees offered by Theorem \ref{theo_main} read as follows. \revThird{Under $(A_1)$} and by selecting a \textit{shrinking-to-zero} sequence of trust-region radii, if iteratively solving problems LOCP$^{\Delta}_k$ returns a sequence of \revThird{extremals $(x_k,p_k,p^0_k,p^1_k,u_k)_{k \in \mathbb{N}}$ such that (1) $(u_k)_{k \in \mathbb{N}}$ converges with respect to the strong topology of $L^2$, and (2) $(p_k(t_f),p^0_k) \rightarrow \ell \neq 0$,} then there exists a Pontryagin extremal for the original problem, i.e., a candidate (local) solution to OCP \revThird{to which $(x_k,p_k,p^0_k,u_k)_{k \in \mathbb{N}}$ converges}%(point 1 in Theorem \ref{theo_main}, which formalizes result 1).
, which formalizes result R1. Moreover, under \revThird{additional regularity on the data defining OCP and} the additional assumption that the generated sequence of \revThird{extremals $(x_k,p_k,p^0_k,p^1_k,u_k)_{k \in \mathbb{N}}$ is such that $p^0_k \neq 0$ for every $k \ge 1$, a converging sequence of controls $(u_k)_{k \in \mathbb{N}}$} always exists%(point 2 in Theorem \ref{theo_main}, which formalizes result 2).
, which formalizes result R2. This can be clearly interpreted as a ``weak'' guarantee of success for SCP, where ``weak'' refers to the fact that only a subsequence of \revThird{$(u_k)_{k \in \mathbb{N}}$} converges, \revThird{a guarantee which is often sought and leveraged from the optimization community, see, e.g., \cite[Theorem 3.4]{Lu2013}, in which ``accumulation points'' are considered}.

\begin{rmk}
    When SCP achieves convergence, \revThird{$(A_2)$ and the requirement $p^0_k \neq 0$ for every $k \ge 1$ are} not needed for the derivation of theoretical guarantees on local optimality. \revThird{On the other hand, the requirements $(p_k(t_f),p^0_k) \rightarrow \ell \neq 0$ (or equivalently $(p(t_f),p^0) \neq 0$) and $p^0_k \neq 0$ for every $k \ge 1$ play the role of ``qualification conditions'' (compare also with condition (14) in \cite[Theorem 3.4]{Lu2013}), a standard requirement in optimization which can be easily checked numerically, see Section \ref{sec_numerics}. In particular, the requirement $p^0_k \neq 0$ for every $k \ge 1$ means that each extremal $(x_k,p_k,p^0_k,p^1_k,u_k)_{k \in \mathbb{N}}$ is \textit{normal} (see \cite{ShvartsmanVinter2006} for a definition), and normality of extremals naturally occurs in many optimal control problem settings (see \cite[Corollary 2.9]{ChitourJeanEtAl2008}, in which the authors show that normality of extremals holds \textit{generically} true as long as optimal controls take value in the interior of the control domain; see also \cite{Bettiol2007,Vinter2010} for additional general settings where extremals are normal), further justifying the requirement $p^0_k \neq 0$ for $k \ge 1$.}
    % the condition $| t^{k+1}_f - t^k_f | < \Delta_{k+1}$ in \eqref{eq_trustStrict} is required only if the final time is free. When \eqref{eq_trustStrict} is fulfilled throughout SCP iterations, theoretical convergence can be analyzed by replacing each LOCP$^{\Delta}_k$ with LOCP$_k$, allowing one to rely on just the classical version of the PMP, i.e., Theorem \ref{theo_PMP}. However, it is worth pointing out that we leverage \eqref{eq_trustStrict} for the sake of clarity and conciseness in the exposition, and that Theorem \ref{theo_main} can be proved even if we remove \eqref{eq_trustStrict}. Indeed, since trajectories are considered under the integral sign, the trust-region constraints $| t^{k+1}_f - t^k_f | \le \Delta_{k+1}$ and $\int^T_0 \| x_{k+1}(s) - x_k(s) \|^2 \; \mathrm{d}s \le \Delta_{k+1}$ play a similar role as the final constraints $g(x(t_f)) = 0$. Thus, they can be combined with final constraints so that no additional Lagrange multipliers appear in the development of necessary conditions for optimality (the reader might gain a better understanding of this claim by analyzing our computations in Section \ref{sec_proof}), although such development requires more extensive computations which we do not report due to lack of space. It is worth mentioning that \eqref{eq_trustStrict} is usually satisfied in practice and can be checked numerically, see Section \ref{sec_numerics}.
\end{rmk}

\begin{rmk}
    Those guarantees adapt when time discretization is adopted to numerically solve each convex subproblem, which is the most frequently used and reliable technique in practice. To see this, fix a time-discretization scheme and consider the discretized version of OCP. Any candidate locally-optimal solution to this discrete formulation satisfies the Karush-Kuhn-Tucker (KKT) conditions. \textit{If} a Pontryagin extremal of OCP exists, the limit of points satisfying the KKT for the discretized version of OCP as the time step tends to zero converges to the aforementioned Pontryagin extremal of OCP (more precisely, up to some subsequence; the reader can find more details in \cite{GollmannKernEtAl2009}). Theorem \ref{theo_main} exactly provides conditions under which the ``if sentence'' above holds true, that is, conditions under which the aforementioned Pontryagin extremal of OCP exists, thus endowing Algorithm \ref{algo_SCP} with correctness guarantees that are independent of any time discretization the user may select (Euler, Runge-Kutta, etc.).
\end{rmk}

\subsection{Proof of the Convergence Result} \label{sec_proof}

We split the proof of Theorem \ref{theo_main} in three main steps. First, we retrace the main steps of the proof of the PMP to introduce necessary notation and expressions. Second, we show the convergence of trajectories and controls, together with the convergence of \revThird{Pontryagin variations (see the paragraph below for a definition)}. The latter represents the cornerstone of the proof and paves the way for the final step, which consists of proving the convergence of the Pontryagin extremals. %For the sake of clarity and conciseness, we carry out the proof for free-final-time problems only, the other case being easier (indeed, in this case we need to develop the same computations but without considering the transversality conditions on the final time, see also \cite[Section 3]{bonalli2019}).

\subsubsection{Pontryagin Variations}

\revThird{Let $u \in \mathcal{U}$ be a feasible control for OCP, with associated trajectory $x_u$ in $[0,t_f]$. For every $r \in [0,t_f]$ Lebesgue point of $u$, and $v \in U$, we define}
\begingroup
\begin{equation} \label{ref:localVarOCP}
\revThird{\tilde \xi^{r,v}_u \triangleq \left( \begin{array}{c}
\displaystyle f^0(r,x_u(r),v) - f^0(r,x_u(r),u(r)) \\
\displaystyle f(r,x_u(r),v) - f(r,x_u(r),u(r))
\end{array} \right) \in \mathbb{R}^{n+1} .}
\end{equation}
\endgroup
The \textit{variation trajectory} \revThird{$\tilde z^{r,v}_u : [0,t_f] \rightarrow \mathbb{R}^{n+1}$ related to $r \in [0,t_f]$}, to $v \in U$, and to the feasible control $u \in \mathcal{U}$ for OCP is defined to be the unique \revThird{(global in $[0,t_f]$)} solution to the following system of linear differential equations
\begin{equation} \label{ref:varOCP}
\begin{cases}
\revThird{\dot{\tilde z}(s) = \displaystyle \left( \begin{array}{cc}
\displaystyle \frac{\partial f^0}{\partial x}(s,x_u(s),u(s)) & 0 \\[-7.5pt]
& \vdots \\[-7pt]
\displaystyle \frac{\partial f}{\partial x}(s,x_u(s),u(s)) & 0
\end{array} \right) \tilde z(s)} \\
\tilde z(r) = \tilde \xi^{r,v}_u .
\end{cases}
\end{equation}
The proof of the PMP goes by contradiction, considering \textit{Pontryagin variations} (see, e.g., \cite{AgrachevSachkov2004}). We define those to be all the vectors \revThird{$\tilde z^{r,v}_u(t_f)$, where $r \in (0,t_f)$} is a Lebesgue point of $u$ and $v \in U$. In particular, if \revThird{$(x_u,u)$} is locally optimal for OCP, then one infers the existence of a nontrivial tuple \revThird{$(\mathfrak{p},p^0) \in \mathbb{R}^{\ell_g+1}$ ($\mathfrak{p}$ is a row vector)}, with $p^0 \le 0$, satisfying, for \revThird{all $r \in (0,t_f)$} Lebesgue point of $u$ and all $v \in U$,
\begin{equation} \label{ref:variationalOCP}
    \revThird{\displaystyle \left( p^0 , \mathfrak{p} \frac{\partial g}{\partial x}(x_u(t_f)) \right) \cdot \tilde z^{r,v}_u(t_f) \le 0 .}
\end{equation}
The non-triviality condition, the adjoint equation, the maximality condition, and the transversality condition listed in Theorem \ref{theo_PMP} derive from \eqref{ref:variationalOCP}. Specifically, it can be shown that a tuple \revThird{$(x_u,p,p^0,u)$} is a Pontryagin extremal for OCP if and only if the nontrivial tuple \revThird{$\displaystyle \left(p(t_f) = \mathfrak{p} \frac{\partial g}{\partial x}(x_u(t_f)),p^0\right) \in \mathbb{R}^{n+1}$} with $p^0 \le 0$ satisfies \eqref{ref:variationalOCP} (see, e.g., \cite{AgrachevSachkov2004}). For this reason, \revThird{$(x_u,\mathfrak{p},p^0,u)$} is also called extremal for OCP.

%\addtolength{\textheight}{-2.5cm}   % This command serves to balance the column lengths
                                  % on the last page of the document manually. It shortens
                                  % the textheight of the last page by a suitable amount.
                                  % This command does not take effect until the next page
                                  % so it should come on the page before the last. Make
                                  % sure that you do not shorten the textheight too much.

\revThird{Let us show how, thanks to the change of variable \eqref{eq_changeVar}, the previous conclusions adapt to each subproblem built in Algorithm \ref{algo_SCP}.} Specifically, for every $k \in \mathbb{N}$, \revThird{assuming $(A_1)$,} let \revThird{$(x_{k+1},u_{k+1})$} denote a solution to \revThird{LOCP$^{\Delta}_{k+1}$}, with related trust-region radius $\Delta_{k+1}$, \revThird{and introduce the smooth curve}
$$
\revThird{y_{k+1} : [0,t_f] \rightarrow \mathbb{R} : t \mapsto \int^t_0 \| x_{k+1}(s) - x_k(s) \|^2 \; \mathrm{d}s .}
$$
\revThird{Clearly, condition \eqref{eq_trust} is equivalent to $y_{k+1}(t_f) - \Delta_{k+1} \le 0$. Next, consider the extended smooth dynamics}
$$
\revThird{\tilde f_{k+1}(s,x,u) \triangleq \left(\begin{array}{c}
    f(s,x,u)\\
    \| x - x_k(s) \|^2
\end{array}\right)}
$$
\revThird{and for every $r \in [0,t_f]$} Lebesgue point of $u_{k+1}$ and every $v \in U$ define
{\small
\begin{equation} \label{ref:localVarLOCP}
\revThird{\tilde \xi^{r,v}_{k+1} = \left( \begin{array}{c}
\displaystyle f^0_{k+1}(r,x_{k+1}(r),v) - f^0_{k+1}(r,x_{k+1}(r),u_{k+1}(r)) \\
\displaystyle \tilde f_{k+1}(r,x_{k+1}(r),v) - \tilde f_{k+1}(r,x_{k+1}(r),u_{k+1}(r))
\end{array} \right) .}
\end{equation}}
Straightforward computations show that the control $u_k$ does not explicitly appear within expression \eqref{ref:localVarLOCP}. Thus the time $r \in [0,t^{k+1}_f]$ needs to be a Lebesgue point of $u_{k+1}$ only. We define the variation trajectory \revThird{$\tilde z^{r,v}_{k+1} : [0,t_f] \rightarrow \mathbb{R}^{n+2}$ related to $r \in [0,t_f]$}, to $v \in U$, and to the locally-optimal control $u_{k+1}$ for \revThird{LOCP$^{\Delta}_{k+1}$} to be the unique \revThird{(global in $[0,t_f]$)} solution to the following system of linear differential equations
\begin{eqnarray} \label{ref:varLOCP}
\begin{cases}
\revThird{\dot{\tilde z}(s) = \displaystyle \left( \begin{array}{ccc}
\displaystyle \frac{\partial f^0_{k+1}}{\partial x}(s,x_{k+1}(s),u_{k+1}(s)) & 0 & 0 \\[-7.5pt]
& \vdots & \vdots \\[-7pt]
\displaystyle \frac{\partial \tilde f_{k+1}}{\partial x}(s,x_{k+1}(s),u_{k+1}(s)) & 0 & 0
\end{array} \right) \tilde z(s)} \\
\tilde z(r) = \tilde \xi^{r,v}_{k+1} .
\end{cases}
\end{eqnarray}
The Pontryagin variations related to \revThird{LOCP$^{\Delta}_{k+1}$} are all the vectors \revThird{$\tilde z^{r,v}_{k+1}(t_f)$}, where \revThird{$r \in (0,t_f)$} is a Lebesgue point of $u_{k+1}$ and $v \in U$. \revThird{At this step, one may easily extend the proof of \cite[Theorem 12.13]{AgrachevSachkov2004} to the case of augmented final constraints $g(x_{k+1}(t_f)) = 0$ and $y_{k+1}(t_f) - \Delta_{k+1} \le 0$, and from the local optimality of $(x_{k+1},u_{k+1})$ for LOCP$^{\Delta}_{k+1}$, infer the existence of a nontrivial tuple $(\mathfrak{p}_{k+1},p^0_{k+1},p^1_{k+1}) \in \mathbb{R}^{\ell_g+2}$ ($\mathfrak{p}_{k+1}$ is a row vector), with $p^0_{k+1} \le 0$, satisfying, for $r \in (0,t_f)$ (Lebesgue for $u_{k+1}$) and $v \in U$,}
\begin{equation} \label{ref:variationalLOCP}
    \revThird{\displaystyle \left( p^0_{k+1} , \mathfrak{p}_{k+1} \frac{\partial g}{\partial x}(x_{k+1}(t_f)) , p^1_{k+1} \right) \cdot \tilde z^{r,v}_{k+1}(t_f) \le 0.}
\end{equation}
\revThird{The non-triviality condition, the adjoint equation, the maximality condition, and the transversality condition listed in Theorem \ref{theo_PMPk} derive from algebraic manipulations on \eqref{ref:variationalLOCP}. Again, we stress the fact that the necessary conditions for optimality offered by \eqref{ref:variationalLOCP} are not exhaustive, in that the sign of the multiplier $p^1_{k+1} \in \mathbb{R}$ and additional slack conditions may be characterized as we mentioned in Remark \ref{remark_slack}. Nevertheless, as we will show shortly \eqref{ref:variationalLOCP} suffices to prove Theorem \ref{theo_main}.}

The main step in the proof of Theorem \ref{theo_main} consists of showing that it is possible to pass the limit $k \rightarrow \infty$ inside \eqref{ref:variationalLOCP}, recovering a nontrivial tuple $(\mathfrak{p},p^0) \in \mathbb{R}^{\ell_g+1}$ with $p^0 \le 0$ that satisfies \eqref{ref:variationalOCP}. Due to the equivalence between the conditions of the PMP and \eqref{ref:variationalOCP}, this is sufficient to prove the existence of a Pontryagin extremal for OCP. We will show that this also implies the convergences stated in Theorem \ref{theo_main}. We will only focus on proving the last part of %case 2
2) in Theorem \ref{theo_main}, by adopting the additional assumption \revThird{$(A_2)$ and the requirement $p^0_k \neq 0$ for every $k \ge 1$}, since proofs of the remaining cases are similar and easier to construct.

\subsubsection{Convergence of Controls and Trajectories}

\revThird{By the compactness of $U$}, the sequence \revThird{$(u_k)_{k \in \mathbb{N}} \subseteq L^2([0,t_f];U)$} is uniformly bounded in \revThird{$L^2([0,t_f];\mathbb{R}^m)$}. Since \revThird{$L^2([0,t_f];U)$} is closed and convex in \revThird{$L^2([0,t_f];\mathbb{R}^m)$} (because $U$ is compact and convex) and \revThird{$L^2([0,t_f];\mathbb{R}^m)$} is reflexive, there exists a control \revThird{$u^* \in L^2([0,t_f];U)$} (in particular $u^* \in \mathcal{U}$) such that we can extract a subsequence (still denoted $(u_k)_{k \in \mathbb{N}}$) that converges to $u^*$ for the weak topology of $L^2$. We denote by $x^*$ the trajectory solution to the dynamics of OCP related to $u^*$, \revThird{which is defined on $[0,t_f]$ thanks to Lemma \ref{lemma_boundness}.}

Next, recalling that thanks to Lemma \ref{lemma_boundness} the trajectories $x_k$ are defined in \revThird{$[0,t_f]$} and uniformly bounded, we show that
\begin{equation} \label{eq_convTraj}
    \revThird{\underset{s \in [0,t_f]}{\sup} \ \| x_k(s) - x^*(s) \| \longrightarrow 0}
\end{equation}
for $k \rightarrow \infty$. This will provide the desired convergence of trajectories. For \revThird{$t \in [0,t_f]$} we have that
{\footnotesize
\begin{align*}
    &\| x_{k+1}(t) - x^*(t) \| \le \int^t_0 \left\| f_0(s,x_k(s)) - f_0(s,x^*(s)) \right\| \; \mathrm{d}s \\
    &\quad + \sum^m_{i=1} \left\| \int^t_0 \left( u^i_{k+1}(s) f_i(s,x_k(s)) - u^i(s) f_i(s,x^*(s)) \right) \; \mathrm{d}s \right\| \\
    &\quad + \int^t_0 \left\| \frac{\partial f_0}{\partial x}(s,x_k(s)) \right\| \| x_{k+1}(s) - x_k(s) \| \; \mathrm{d}s \\
    &\quad + \sum^m_{i=1} \int^t_0 \left\| u^i_k(s) \frac{\partial f_i}{\partial x}(s,x_k(s)) \right\| \| x_{k+1}(s) - x_k(s) \| \; \mathrm{d}s \\
    &\le C_1 \bigg( \int^t_0 \left\| x_{k+1}(s) - x^*(s) \right\| \; \mathrm{d}s + \revThird{\Delta^{\frac{1}{2}}_{k+1}} \\
    &\quad + \underbrace{\sum^m_{i=1} \left\| \int^t_0 f_i(s,x^*(s)) \left( u^i_{k+1}(s) - (u^*)^i(s) \right) \; \mathrm{d}s \right\|}_{\triangleq \delta^{u,1}_{k+1}(t)} \bigg)
\end{align*}}
where $C_1 \ge 0$ is a constant that stems from the uniform boundedness of $(x_k)_{k \in \mathbb{N}}$ (see also the proof of Lemma \ref{lemma_boundness} in the Appendix). Now, the definition of weak convergence in $L^2$ gives that, for every fixed \revThird{$t \in [0,t_f]$}, $\delta^{u,1}_{k+1}(t) \rightarrow 0$ for $k \rightarrow \infty$. In addition, by the compactness of $U$ and $\textnormal{supp} f_i$, there exists a constant $C_2 \ge 0$ such that, for every \revThird{$t, s \in [0,t_f]$}
$$
| \delta^{u,1}_{k+1}(t) - \delta^{u,1}_{k+1}(s) | \le C_2 | t - s |
$$
uniformly with respect to $k \in \mathbb{N}$. Thus, by \cite[Lemma 3.4]{trelat2000}, $\delta^{u,1}_{k+1}(t) \rightarrow 0$ for $k \rightarrow \infty$ uniformly in the interval \revThird{$[0,t_f]$}. \revThird{We conclude thanks to $\Delta_k \rightarrow 0$ and a routine Gr\"onwall inequality argument} (see also the proof of Lemma \ref{lemma_boundness} in the Appendix).

Let us prove that the trajectory \revThird{$x^* : [0,t_f] \rightarrow \mathbb{R}^n$} is feasible for OCP. To do so, we only need to prove that \revThird{$g(x^*(t_f)) = 0$}. Note that from \eqref{eq_convTraj},  $\Delta_k \rightarrow 0$, and the boundedness and convergence of the trajectories, we have that
\begin{align*}
    \revThird{\| g(x^*(t_f)) \|} &\revThird{= \| g(x^*(t_f)) \pm g(x_k(t_f)) \|} \revThird{\le \| g(x_k(t_f)) \|} \\
    &\revThird{+ \underset{x \in K}{\sup} \ \left\| \frac{\partial g}{\partial x}(x) \right\| \| x^*(t_f) - x_k(t_f) \| \longrightarrow 0 ,}
\end{align*}
\revThird{where $K \subseteq \mathbb{R}^n$ is some compact set (see Lemma \ref{lemma_boundness}), from which we conclude that $x^* : [0,t_f] \rightarrow \mathbb{R}^n$} is feasible for OCP.

\subsubsection{Convergence of Pontryagin Variations} \label{sec_variations}

Due to the convergence of controls and trajectories, we can now prove that it is possible to pass the limit $k \rightarrow \infty$ inside \eqref{ref:variationalLOCP}, showing that \eqref{ref:variationalOCP} holds. \revThird{\revFourth{First, thanks to $(A_2)$ and $p^0_k \neq 0$ for every $k \ge 1$, by \cite[Lemma 5.3]{ShvartsmanVinter2006} every control $u_{k+1}$ is continuous.} Therefore, the following result holds (see \cite[Lemma 3.11]{bonalli2019}):}

\begin{lmm}[Pointwise convergence of controls] \label{ref:lemmaLebesgue}
For every \revThird{$r \in (0,t_f)$} Lebesgue point of $u^*$ there exists \revThird{$(r_k)_{k \in \mathbb{N}} \subseteq (0,t_f)$} such that $r_k$ is a Lebesgue point of $u_k$, and the convergences $r_k \rightarrow r$ and $u_k(r_k) \rightarrow u^*(r)$ hold for $k \rightarrow \infty$.
\end{lmm}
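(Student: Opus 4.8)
The plan is to construct $(r_k)$ from three ingredients: the averaging property of the Lebesgue point $r$ of $u^*$, the convergence of local averages supplied by the weak $L^2$-convergence $u_k \rightharpoonup u^*$ established above, and the continuity of each $u_k$ near $r$ granted by $(A_3)$. First I would \emph{localize}: since $\mathcal{D}$ is finite I can fix $\rho > 0$ with $[r,r+\rho] \subseteq (0,t^*_f)$ and $(r,r+\rho] \cap \mathcal{D} = \emptyset$, so that every $u_k$ is continuous on $(r,r+\rho]$ (this choice also covers the case $r \in \mathcal{D}$, since the sampling points will be taken strictly to the right of $r$). On this interval every point is a Lebesgue point of $u_k$, which automatically fulfils the requirement that each $r_k$ be a Lebesgue point of $u_k$.

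Next I would set up a \emph{double limit on local averages}. Writing $A^\epsilon \triangleq \tfrac{1}{\epsilon}\int_r^{r+\epsilon} u^*(s)\,\mathrm{d}s$ and $A_k^\epsilon \triangleq \tfrac{1}{\epsilon}\int_r^{r+\epsilon} u_k(s)\,\mathrm{d}s$ for $\epsilon \in (0,\rho)$, the defining property of the Lebesgue point $r$ gives $A^\epsilon \to u^*(r)$ as $\epsilon \to 0^+$, while for each \emph{fixed} $\epsilon$ the function $\tfrac{1}{\epsilon}\mathbf{1}_{[r,r+\epsilon]}$ is an admissible $L^2$ test function, so $u_k \rightharpoonup u^*$ yields $A_k^\epsilon \to A^\epsilon$ as $k \to \infty$. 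A routine diagonal extraction then furnishes $\epsilon_k \to 0^+$ with $A_k^{\epsilon_k} \to u^*(r)$. It then remains to pass from convergence of the averages $A_k^{\epsilon_k}$ to convergence of a genuine pointwise value $u_k(r_k)$ at some $r_k \in (r,r+\epsilon_k]$, which forces $r_k \to r$ as well.

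The hard part is precisely this last passage, and it is where I expect the main obstacle to lie: weak $L^2$-convergence controls only averages, and a continuous vector-valued function need not attain its average over an interval, so an oscillating control could keep $A_k^{\epsilon_k} \to u^*(r)$ while its pointwise values remain bounded away from $u^*(r)$. Continuity from $(A_3)$ is therefore not, by itself, obviously sufficient in the multidimensional case. To close the gap I would invoke the \emph{extremal} structure of the $u_k$: by the maximality condition each $u_k(s)$ maximizes $v \mapsto H_{k+1}\!\left(s,x_k(s),\mathfrak{p}_k\frac{\partial g}{\partial x}(x_k(t^k_f)),p^0_k,v\right)$ over the compact convex set $U$. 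Using the uniform bounds on $x_k$ from Lemma \ref{lemma_boundness}, the normalization of $(\mathfrak{p}_k,p^0_k)$, and the equi-Lipschitz regularity of the adjoints (which solve the adjoint equation with uniformly bounded coefficients), this maximizer is equi-continuous in $s$ on $(r,r+\rho]$; Arzel\`a--Ascoli then yields uniform convergence of a subsequence, necessarily to $u^*$ by uniqueness of weak limits, whence $u_k \to u^*$ uniformly near $r$ and one may take $r_k = r + \epsilon_k$ (or even $r_k=r$ when $r \notin \mathcal{D}$).

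For orientation I would note that the scalar case $m=1$ avoids this entirely: there the intermediate value theorem produces $r_k \in (r,r+\epsilon_k]$ with $u_k(r_k) = A_k^{\epsilon_k}$, and the diagonal estimate gives $u_k(r_k) \to u^*(r)$ and $r_k \to r$ directly. Thus the genuine difficulty, and the step demanding the most care under the mild hypothesis $(A_3)$, is establishing the equi-continuity (equivalently, the controlled oscillation) of the extremal controls that upgrades the convergence of averages to the convergence of attained values.
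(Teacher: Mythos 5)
Your localization step and your diagonal argument on averages are both sound, and your diagnosis of where the difficulty sits is exactly right: weak $L^2$ convergence controls only averages, and for $m\ge 2$ convergence of averages cannot be upgraded to convergence of attained values without some rigidity of the family $(u_k)_{k\in\mathbb{N}}$. The problem is that the rigidity you invoke does not exist. The Hamiltonian of LOCP$_{k+1}$ is affine in $v$ through the dynamics plus the term $p^0_{k+1}G(s,v)$ with $p^0_{k+1}\le 0$, hence merely \emph{concave} in $v$ (affine whenever $G$ vanishes or the extremal is abnormal); its argmax over $U$ need not be a singleton, so ``the maximizer'' is not a well-defined function of the data, let alone an equi-continuous one. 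Even when the maximizer is unique for every $k$ and $s$, equi-Lipschitz coefficient data does not yield equi-continuity of the maximizers: take $U$ the closed unit disk, and a switching vector $\psi_k(s) = k^{-1}(\cos ks,\sin ks)$, which is uniformly bounded, Lipschitz with constant $1$ uniformly in $k$, and converges uniformly to zero; the unique maximizer of $v\mapsto \psi_k(s)\cdot v$ over $U$ is $u_k(s)=(\cos ks,\sin ks)$, which is continuous (so $(A_3)$ holds with $\mathcal{D}=\emptyset$) and converges weakly to $u^*\equiv 0$, yet $\|u_k(s)\|=1$ for all $s$, so no choice of $r_k$ can give $u_k(r_k)\rightarrow u^*(r)$. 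This shows that your Arzel\`a--Ascoli step collapses: maximality plus equi-Lipschitz adjoints (note also that the maximality condition involves $p_k(s)$, not the terminal value $\mathfrak{p}_k\frac{\partial g}{\partial x}(x_k(t^k_f))$) is strictly weaker than equi-continuity of the controls. What would rescue the argument is uniform strong concavity in $v$, e.g.\ $G$ strongly convex together with normality and $p^0_k$ bounded away from zero, but neither is among the paper's hypotheses.

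For comparison, the paper does not attempt a self-contained argument here at all: it obtains the lemma as a ``straightforward adaptation'' of \cite[Lemma 3.11]{bonalli2019}, in which the hypothesis of continuity of the controls is replaced by $(A_3)$; in other words, the crux you isolated is precisely what is outsourced to the cited result. Your scalar-case remark is correct and gives a genuinely elementary proof for $m=1$ (intermediate value theorem applied to the averages), but in the vector-valued case your bridge from averages to pointwise values fails, so the proposal has a genuine gap at the decisive step.
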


Now, fix $r \in (0,t^*_f)$ Lebesgue point of $u^*$, and $v \in U$, and let $(r_k)_{k \in \mathbb{N}}$ be the sequence provided by Lemma \ref{ref:lemmaLebesgue} related to $r$ and $v$. We prove the following convergence:
\begin{equation} \label{eq_convVarTraj}
    \revThird{\underset{s \in [r,t_f]}{\sup} \ \| \tilde z^{r_{k+1},v}_{k+1}(s) - ( \tilde z^{r,v}_{u^*}(s) , 0 ) \| \longrightarrow 0}
\end{equation}
for $k \rightarrow \infty$, where $\tilde z^{r_{k+1},v}_{k+1}$ solves \eqref{ref:varLOCP} with initial condition $\tilde z^{r_{k+1},v}_{k+1}(r_{k+1}) = \tilde \xi^{r_{k+1},v}_{k+1}$ given by \eqref{ref:localVarLOCP}, whereas $\tilde z^{r,v}_{u^*}$ solves \eqref{ref:varOCP} with initial condition $\tilde z^{r,v}_{u^*}(r) = \tilde \xi^{r,v}_{u^*}$ given by \eqref{ref:localVarOCP}. First,
{\footnotesize
\begin{align*}
    &\| \tilde \xi^{r_{k+1},v}_{k+1} - \revThird{( \tilde \xi^{r,v}_{u^*} , 0 )} \| \le \\
    &\le \sum^m_{i=1} |v^i| \| f_i(r_{k+1},x_{k+1}(r_{k+1})) - f_i(r,x^*(r)) \| \\
    &+ \sum^m_{i=1} \| u^i_{k+1}(r_{k+1}) f_i(r_{k+1},x_{k+1}(r_{k+1})) - (u^*)^i(r) f_i(r,x^*(r)) \| \\
    &+ \| G(r_{k+1},v) - G(r,v) \| \\
    &+ \| G(r_{k+1},u_{k+1}(r_{k+1})) - G(r,u^*(r)) \| \\
    &+ \sum^m_{i=1} |v^i| \| L^i(r_{k+1},x_{k+1}(r_{k+1})) - L^i(r,x^*(r)) \| \\
    &+ \sum^m_{i=1} \| u^i_{k+1}(r_{k+1}) L^i(r_{k+1},x_{k+1}(r_{k+1})) - (u^*)^i(r) L^i(r,x^*(r)) \| \\
    &\le C_3 \Big( | r_{k+1} - r | + \| x_{k+1}(r_{k+1}) - x^*(r) \| + \| u_{k+1}(r_{k+1}) - u^*(r) \| \Big)
\end{align*}}
where $C_3 \ge 0$ is a constant, and from Lemma \ref{ref:lemmaLebesgue}
and \eqref{eq_convTraj} we infer that $\| \tilde \xi^{r_{k+1},v}_{k+1} - \revThird{( \tilde \xi^{r,v}_{u^*} , 0 )} \| \rightarrow 0$ for $k \rightarrow \infty$. Second, by leveraging the uniform boundedness of the trajectories, with the same exact argument proposed in the proof of Lemma \ref{lemma_boundness} in the Appendix, one may show that the sequence of variation trajectories $(\tilde z^{r_k,v}_k)_{k \in \mathbb{N}}$ is uniformly bounded in the time interval \revThird{$[r,t_f]$}. From this, for every \revThird{$t \in [r,t_f]$}

{\small
$$
\| \tilde z^{r_{k+1},v}_{k+1}(t) - \revThird{( \tilde z^{r,v}_{u^*}(t) , 0 )} \| \le \| \tilde \xi^{r_{k+1},v}_{k+1} - \revThird{( \tilde \xi^{r,v}_{u^*} , 0 )} \| + C_4 | r_{k+1} - r |
$$
$$
\quad + \int^t_r \bigg( \revThird{2 \| x_{k+1}(s) - x_k(s) \| +} \left\| \frac{\partial f_{k+1}}{\partial x}(s,x_{k+1}(s),u_{k+1}(s)) \right\|
$$
$$
\quad + \left\| \frac{\partial f^0_{k+1}}{\partial x}(s,x_{k+1}(s),u_{k+1}(s)) \right\| \bigg) \| \tilde z^{r_{k+1},v}_{k+1}(s) - \revThird{( \tilde z^{r,v}_{u^*}(s) , 0 )} \| \; \mathrm{d}s
$$
$$\hspace{-7.5ex}+ \bigg\| \int^t_r \bigg( \revThird{\left( \begin{array}{ccc}
    \displaystyle \frac{\partial f^0_{k+1}}{\partial x}(s,x_{k+1}(s),u_{k+1}(s)) & 0 & 0 \\
    \displaystyle \frac{\partial f_{k+1}}{\partial x}(s,x_{k+1}(s),u_{k+1}(s)) & \vdots & \vdots \\
    2 ( x_{k+1}(s) - x_k(s) )^{\top} & 0 & 0
\end{array} \right)}
$$
$$
\qquad \qquad - \revThird{\left( \begin{array}{ccc}
    \displaystyle \frac{\partial f^0}{\partial x}(s,x^*(s),u^*(s)) & 0 & 0 \\
    \displaystyle \frac{\partial f}{\partial x}(s,x^*(s),u^*(s)) & \vdots & \vdots \\
    0 \qquad \qquad \dots & 0 & 0
\end{array} \right) \bigg)} \revThird{\left( \begin{array}{c} \tilde z^{r,v}_{u^*}(s) \\
0 \end{array} \right)} \; \mathrm{d}s \bigg\|
$$
$$
\hspace{-40ex}\le \| \tilde \xi^{r_{k+1},v}_{k+1} - \revThird{( \tilde \xi^{r,v}_{u^*} , 0 )} \|
$$
$$
+ C_4 \bigg( | r_{k+1} - r | + \int^t_r \| \tilde z^{r_{k+1},v}_{k+1}(s) - \revThird{( \tilde z^{r,v}_{u^*}(s) , 0 )} \| \; \mathrm{d}s \revThird{+ \Delta^{\frac{1}{2}}_{k+1}}
$$
$$
\quad + \int^t_r \| x_k(s) - x^*(s) \| \; \mathrm{d}s + \int^t_r \| x_{k+1}(s) - x^*(s) \| \; \mathrm{d}s \bigg)
$$
$$
\quad + \underbrace{\sum^m_{i=1} \left\| \int^t_r F(f_i,L^i,\tilde z^{r,v}_{u^*})(s) \left( u^i_k(s) - (u^*)^i(s) \right) \; \mathrm{d}s \right\|}_{\triangleq \delta^{u,2}_{k+1}(t)} ,
$$}
where the (overloaded) constant $C_4 \ge 0$ comes from the uniform boundedness of both $(x_k)_{k \in \mathbb{N}}$ and $(\tilde z^{r_k,v}_k)_{k \in \mathbb{N}}$ previously stated. In particular, we introduce the terms \revThird{$F(f_i,L^i,\tilde z^{r,v}_{u^*}) : [r,t_f] \rightarrow \mathbb{R}$} that are continuous and uniformly bounded mappings depending on $f_i$, $L^i$, and $\tilde z^{r,v}_{u^*}$. Following the exact same argument we developed for $\delta^{u,1}_{k+1}$, one prove that $\delta^{u,2}_{k+1}(t) \rightarrow 0$ for $k \rightarrow \infty$, uniformly in the interval \revThird{$[r,t_f]$}, so that \eqref{eq_convTraj} and a routine Gr\"onwall inequality argument allow us to obtain \eqref{eq_convVarTraj}.

Importantly, convergence \eqref{eq_convVarTraj} implies that, for $k \rightarrow \infty$, 
\begin{equation} \label{eq_convVar}
    \revThird{\| \tilde z^{r_k,v}_k(t_f) - \revThird{( \tilde z^{r,v}_{u^*}(t_f) , 0 )} \| \longrightarrow 0 .}
\end{equation}

\subsubsection{Convergence of Extremals and Conclusion}

At this step, consider the sequence of tuples \revThird{$(\mathfrak{p}_k,p^0_k,p^1_k)_{k \in \mathbb{N}}$}, with $p^0_k \le 0$ for $k \ge 1$. It is clear that the variational expressions \eqref{ref:variationalLOCP} remain valid whenever \revThird{$(\mathfrak{p}_k,p^0_k,p^1_k)$} is multiplied by some positive constant. Therefore, without loss of generality, we may assume that \revThird{$\| (\mathfrak{p}_k,p^0_k,p^1_k) \| = 1$} and $p^0_k \le 0$ for $k \ge 1$. Then, we can extract a subsequence (still denoted $(\mathfrak{p}_k,p^0_k,p^1_k)_{k \in \mathbb{N}}$) that converges to some nontrivial tuple $(\mathfrak{p},p^0,p^1)$ satisfying $p^0 \le 0$. At this step, we may leverage \eqref{eq_convTraj} and \eqref{eq_convVar} to prove that $(t^*_f,x^*,\mathfrak{p},p^0,u^*)$ is the sought-after \revThird{non-trivial} extremal for OCP \revThird{when $p^0 \neq 0$}. Indeed, for every $r \in (0,t^*_f)$ Lebesgue point of $u^*$, and $v \in U$, \eqref{eq_convTraj} and \eqref{eq_convVar} we have that, for $k \rightarrow \infty$,
\begin{align*}
    &\revThird{\left( p^0 , \mathfrak{p} \frac{\partial g}{\partial x}(x^*(t_f)) \right) \cdot \tilde z^{r,v}_{u^*}(t_f) = } \\
    &\revThird{\left( p^0 , \mathfrak{p} \frac{\partial g}{\partial x}(x^*(t_f)) , p^1 \right) \cdot ( \tilde z^{r,v}_{u^*}(t_f) , 0 ) \le} \\
    &\revThird{\le \bigg| \left( p^0 , \mathfrak{p} \frac{\partial g}{\partial x}(x^*(t_f)) , p^1 \right) \cdot ( \tilde z^{r,v}_{u^*}(t_f) , 0 )} \\
    &\revThird{\qquad \qquad \qquad - \left( p^0_k , \mathfrak{p}_k \frac{\partial g}{\partial x}(x_k(t_f)) , p^1_k \right) \cdot \tilde z^{r_k,v}_k(t_f) \bigg| \longrightarrow 0}
\end{align*}
due to the inequality of \eqref{ref:variationalLOCP}, and we conclude.

The proof of Theorem \ref{theo_main} is achieved if we show that
\begin{equation} \label{eq_convAdj}
    \revThird{\underset{s \in [0,t_f]}{\sup} \ \| p_{k+1}(s) - p(s) \| \longrightarrow 0}
\end{equation}
for $k \rightarrow \infty$, where $p_{k+1}$ solves
{\small
\begin{equation*}
    \begin{cases}
        \displaystyle \dot{p}_{k+1}(s) = -\frac{\partial H_{k+1}}{\partial x}(s,x_{k+1}(s),p_{k+1}(s),p^0_{k+1},u_{k+1}(s)) \medskip \\
        \revThird{\displaystyle p_{k+1}(t_f) = \mathfrak{p}_{k+1} \frac{\partial g}{\partial x}(x_{k+1}(t_f)) ,}
    \end{cases}
\end{equation*}}
whereas $p$ solves
\begin{equation*}
    \begin{cases}
        \displaystyle \dot{p}(s) = -\frac{\partial H}{\partial x}(s,x^*(s),p(s),p^0,u^*(s)) \medskip \\
        \revThird{\displaystyle p(t_f) = \mathfrak{p} \frac{\partial g}{\partial x}(x^*(t_f)) .}
    \end{cases}
\end{equation*}
To this end, by leveraging the uniform boundedness of the trajectories, with the same exact argument proposed in the proof of Lemma \ref{lemma_boundness} (see the Appendix), one shows that the sequence $(p_k)_{k \in \mathbb{N}}$ is uniformly bounded in the interval \revThird{$[0,t_f]$}. From this, for every \revThird{$t \in [0,t_f]$} we have that
{\small
\begin{align*}
    &\revThird{\| p_{k+1}(t) - p(t) \| \le \left\| \mathfrak{p}_{k+1} \frac{\partial g}{\partial x}(x_{k+1}(t_f)) - \mathfrak{p} \frac{\partial g}{\partial x}(x^*(t_f)) \right\|} \\
    &\revThird{+ C_5 \bigg( | p^1_{k+1} | \Delta^{\frac{1}{2}}_{k+1} + | p^0_{k+1} - p^0 | + \int^{t_f}_t \| p_{k+1}(s) - p(s) \| \; \mathrm{d}s} \\
    &\revThird{\qquad + \int^{t_f}_t \| x_k(s) - x^*(s) \| \; \mathrm{d}s + \int^{t_f}_t \| x_{k+1}(s) - x^*(s) \| \; \mathrm{d}s \bigg)} \\
    &\revThird{+ \underbrace{\sum^m_{i=1} \left\| \int^{t_f}_t F(f_i,L^i,p,p^0)(s) \left( u^i_k(s) - (u^*)^i(s) \right) \; \mathrm{d}s \right\|}_{\triangleq \delta^{u,3}_{k+1}(t)}}
\end{align*}}
where the constant $C_5 \ge 0$ comes from the uniform boundedness of both $(x_k)_{k \in \mathbb{N}}$ and $(p_k)_{k \in \mathbb{N}}$, whereas \revThird{$F(f_i,L^i,p,p^0) : [r,t_f] \rightarrow \mathbb{R}$} again denote continuous and uniformly bounded mappings that depend on $f_i$, $L^i$, $p$, and $p^0$. Following the exact same argument we developed for $\delta^{u,1}_{k+1}$, one proves that $\delta^{u,3}_{k+1}(t) \rightarrow 0$ for $k \rightarrow \infty$, uniformly in the interval \revThird{$[0,t_f]$}, so that \eqref{eq_convTraj} and a routine Gr\"onwall inequality argument allow us to conclude (see also the Appendix).

\section{Sequential Convex Programming with \\ Manifold-Type Constraints} \label{sec_manif}

We now show how the framework described in Section \ref{sec_noManif} can be applied \textit{verbatim} to solve our optimal control problem when additional manifold-type constrains are considered, under mild regularity assumptions on the dynamics. In this context, we focus on problems OCP$^M$ defined as:
\begin{equation*}
    \begin{cases}
        \displaystyle \underset{\revThird{u \in \mathcal{U}}}{\min} \ \int^{t_f}_0 f^0(s,x(s),u(s)) \; \mathrm{d}s \\
        \dot{x}(s) = f(s,x(s),u(s)) , \ \textnormal{a.e.} \ \revThird{s \in [0,t_f]} \\
        x(0) = x^0 \in M , \quad g(x(t_f)) = 0 \\
        x(s) \in M \subseteq \mathbb{R}^n , \quad s \in [0,t_f]
    \end{cases}
\end{equation*}
where $M \subseteq \mathbb{R}^n$ is a smooth $d$-dimensional submanifold of $\mathbb{R}^n$ and, for the sake of consistency, we assume that $g^{-1}(0) \cap M \neq \emptyset$. \revThird{We denote by $(x^*,u^*)$ any solution to OCP$^M$.} %Similar to the previous case, it is clear that any solution \revThird{$(x^*,u^*)$} to OCP$^M$ that strictly satisfies the penalized state constraints is also a locally-optimal solution to OCP.

\subsection{Unchanged Framework under Regular Dynamics}

One possibility to solve OCP$^M$ would consist of penalizing the manifold-type constraints within the cost (see Remark \ref{remark_penalization}). Although possible, this approach might add undue complexity to the formulation. Interestingly, in several important cases for applications, this issue can be efficiently avoided. To this end, we assume that the following \textit{regularity condition} holds:
\begin{Itemize}
    \item[$(A_3)$] For $i=0,\dots,m$, the vector fields $f_i : \mathbb{R}^{n+1} \rightarrow \mathbb{R}^n$ are such that $f_i(s,x) \in T_x M$, for every $(s,x) \in \mathbb{R} \times M$.
\end{Itemize}
In $(A_3)$, $T_x M$ denotes the tangent space of $M$ at $x \in M$, which we identify with a $d$-dimensional subspace of $\mathbb{R}^n$. This requirement is often satisfied when dealing with mechanical systems in aerospace and robotics applications (for instance, consider rotation and/or quaternion-type constraints). Under $(A_3)$, as a classical result, the trajectories of $\dot{x}(s) = f(s,x(s),u(s))$ starting from $x^0 \in M$ lie on the submanifold $M$, and therefore, the condition $x(s) \in M$, $s \in [0,t_f]$, is automatically satisfied. In other words, we may remove manifold-type constraints from problem OCP$^M$ so that it exactly resembles OCP, i.e., the formulation adopted in Section \ref{sec_noManif} with the additional constraint $x^0 \in M$. At this step, we may leverage the machinery built previously to solve OCP. Specifically, the construction of each subproblem \revThird{LOCP$^{\Delta}_k$} and Algorithm \ref{algo_SCP} applies unchanged. Due to the linearization of the dynamics, solutions to the convex subproblems are not supposed to lie on $M$. However, convergence does force the limiting trajectory to satisfy the manifold-type constraints.

\subsection{Convergence Analysis}

The convergence of Algorithm \ref{algo_SCP} applied to this new context can be inferred from Theorem \ref{theo_main}. However, despite the regularity assumption $(A_3)$, it is not obvious that the optimality claimed by this result extends to the general geometric setting brought on by manifold-type constraints. Specifically, if Algorithm \ref{algo_SCP} converges to a trajectory satisfying the assumptions of Theorem \ref{theo_main}, although such a trajectory meets manifold-type constraints, the related extremal satisfies the PMP for problems defined in the Euclidean space by construction. In other words, a priori the extremal does not carry any information about the geometric structure of a problem with manifold-type constraints, whereas extremals for OCP$^M$ are expected to satisfy stronger \textit{geometrically-consistent} necessary conditions for optimality. Specifically, to recover a geometrically-consistent candidate optimal solution for OCP$^M$, we must show that this satisfies the Geometric PMP (GPMP) (see, e.g., \cite{AgrachevSachkov2004}), necessary conditions for optimality for OCP$^M$ which are stronger than PMP. This is our next objective.

Before stating the GPMP related to formulation OCP$^M$, we first need to introduce some notation and preliminary results (further details may be found in \cite{AgrachevSachkov2004}). We denote $TM$ and $T^*M$ as the tangent and cotangent bundle of $M$, respectively. Due to $(A_3)$, the mapping
$$
f_M : \mathbb{R} \times M \times \mathbb{R}^m \rightarrow TM : (s,x,u) \mapsto f(s,x,u) $$
is a well-defined, non-autonomous vector field of $M$. Thus, trajectories related to feasible solutions $(t_f,x,u)$ for OCP$^M$ may be seen as solutions to the geometric dynamical equations
\begin{equation} \label{eq_geoDyn}
    \dot{x}(s) = f_M(s,x(s),u(s)) , \quad x^0 \in M .
\end{equation}
In a geometric setting, given a feasible solution \revThird{$(x,u)$} for OCP$^M$, Pontryagin extremals are represented by the quantity \revThird{$(\lambda,p^0,u)$}. In particular, the information concerning the trajectory $x$ that satisfies \eqref{eq_geoDyn} is encapsulated within the cotangent curve $\lambda : [0,t_f] \rightarrow T^*M$, i.e., $x(s) = \pi(\lambda(s))$, $s \in [0,t_f]$, where $\pi : T^*M \rightarrow M$ is the canonical projection. At this step, for $\lambda \in T^*M$ and $p^0 \in \mathbb{R}$, we may define the geometric Hamiltonian (related to OCP$^M$) as
$$
H(s,\lambda,p^0,u) \triangleq \langle \lambda , f_M(s,\pi(\lambda),u) \rangle + p^0 f^0(s,\pi(\lambda),u),
$$
where $\langle \cdot, \cdot \rangle$ denotes the duality in $T^*M$. We remark that whenever $M = \mathbb{R}^n$, we recover the Hamiltonian introduced in Section \ref{sec_noManif}. In the geometric framework, adjoint equations are described in terms of \textit{Hamiltonian vector fields}. Specifically, as a classical result, for every $(s,u) \in \mathbb{R}^{m+1}$ one can associate to $H(s,\cdot,\cdot,u)$ a unique vector field $\overset{\rightarrow}{H}(s,\cdot,\cdot,u) : T^*(M \times \mathbb{R}) \rightarrow T(T^*(M \times \mathbb{R}))$ of the product cotangent bundle $T^*(M \times \mathbb{R})$ (known as Hamiltonian vector field) by the rule $\displaystyle \sigma_{(\lambda,p^0)}\Big(\cdot,\overset{\rightarrow}{H}(s,\lambda,p^0,u)\Big) = \frac{\partial H}{\partial (\lambda,p^0)}(s,\lambda,p^0,u)$, with $\sigma$ being the canonical symplectic form of $T^*(M \times \mathbb{R})$. We are now ready to state the GPMP related to OCP$^M$.

\begin{thm}[\revThird{GPMP for OCP$^M$ \cite{AgrachevSachkov2004}}] \label{theo_PMPGeo}
Let \revThird{$(x^*,u^*)$} be a locally-optimal solution to OCP$^M$. There exists an absolutely continuous curve\footnote{Continuity is meant with respect to the standard topology in $T^*M$.} \revThird{$\lambda : [0,t_f] \rightarrow T^*M$} with $x^*(s) = \pi(\lambda(s))$, \revThird{$s \in [0,t_f]$} and a constant $p^0 \le 0$, such that the following hold:
\begin{itemize}
    \item Non-Triviality Condition: $(\lambda,p^0) \neq 0$.
    \item Adjoint Equation: Almost everywhere in \revThird{$[0,t_f]$},
    $$
    \frac{d(\lambda,p^0)}{ds}(s) = \overset{\rightarrow}{H}(s,\lambda(s),p^0,u^*).
    $$
    \item Maximality Condition: Almost everywhere in \revThird{$[0,t_f]$},
    $$
    H(s,\lambda(s),p^0,u^*(s)) = \underset{v \in U}{\max} \ H(s,\lambda(s),p^0,v).
    $$
    \item Transversality Condition: It holds that
    $$
    \revThird{\lambda(t_f) \ \perp \ \textnormal{ker} \frac{\partial g_M}{\partial x}(x^*(t_f)) ,}
    $$
    where we denote $g_M : M \rightarrow \mathbb{R}^{\ell_g} : x \mapsto g(x)$.
\end{itemize}
The tuple \revThird{$(\lambda,p^0,u^*)$} is called geometric extremal \revThird{for OCP$^M$}.
\end{thm}

\revThird{\begin{rmk}
    As discussed previously, thanks to $(A_3)$, each convex problem LOCP$^{\Delta}_k$ may be correctly formulated as in Section \ref{sec_convexProb} by dropping manifold-type constraints and considering dynamics as vector fields in $\mathbb{R}^n$. Therefore, we may again leverage Theorem \ref{theo_PMPk} as necessary conditions for optimality for each LOCP$^{\Delta}_k$. Accordingly, for $k \ge 1$, (weak) extremals for LOCP$^{\Delta}_k$ are denoted by $(x_k,p_k,p^0_k,p^1_k,u_k)$.
\end{rmk}}

Assuming that Algorithm \ref{algo_SCP} applied as described above converges, we prove that the limiting solution is a candidate local optimum for OCP$^M$ by showing that it is possible to appropriately \textit{orthogonally project} the extremal for OCP provided by Theorem \ref{theo_main} to recover a geometric extremal for OCP$^M$. First, we need to introduce the notion of the orthogonal projection to a subbundle. Specifically, given the cotangent bundles $T^*M \subseteq T^*\mathbb{R}^n \cong \mathbb{R}^{2 n}$, define $T^*\mathbb{R}^n|_M \triangleq \bigcup_{x \in M} \ \{ x \} \times T^*_x \mathbb{R}^n \cong M \times \mathbb{R}^n$. Equipped with the structure of the pullback bundle given by the canonical projection $T^*\mathbb{R}^n|_M \rightarrow M$, $T^*\mathbb{R}^n|_M$ is a vector bundle over $M$ of rank $n$, and $T^*M$ may be identified with a subbundle of $T^*\mathbb{R}^n|_M$. We build an orthogonal projection operator from $T^*\mathbb{R}^{n+1}|_{\mathbb{R} \times M}$ to $T^*(\mathbb{R} \times M)$ by leveraging the usual orthogonal projection in $\mathbb{R}^{n+1}$. To do this, let $x \in M$ and $(V,\varphi) = (V,y^1,\dots,y^n)$ be a local chart of $x$ in $\mathbb{R}^n$ adapted to $M$, i.e., satisfying $\varphi(V \cap M) = \varphi(V) \cap \mathbb{R}^{d} \times \{ 0 \}^{n-d}$. By construction, $\{ dy^j(\cdot) \}_{j=1,\dots,n}$ is a local basis for $T^*\mathbb{R}^n|_M$ and $\{ dy^j(\cdot) \}_{j=1,\dots,d}$ is a local basis for $T^*M$ around $x$. Consider the cometric $\langle \cdot , \cdot \rangle_{\mathbb{R}^n}$ in $T^*\mathbb{R}^n|_M$ which is induced by the Euclidean scalar product in $\mathbb{R}^n$. The Gram-Schmidt process applied to $\{ dy^j(\cdot) \}_{j=1,\dots,n}$ provides a local orthonormal frame $\{ E_j(\cdot) \}_{j=1,\dots,n}$ for $T^*\mathbb{R}^n|_M$, that satisfies in $V \cap M$
\begingroup
\begin{equation} \label{ref:Span}
\textnormal{span} \langle E_1(\cdot) , \dots , E_j(\cdot) \rangle = \textnormal{span} \langle dy^1(\cdot) , \dots , dy^j(\cdot) \rangle
\end{equation}
\endgroup
for every $1 \le j \le n$. It follows that, when restricted to $V \cap M$, the following orthogonal projection operator
\begingroup
\begin{equation*}
\begin{split}
\Pi : \ &T^* \mathbb{R}^{n+1}|_{\mathbb{R} \times M} \rightarrow T^*(\mathbb{R} \times M) \cong \mathbb{R}^2 \times T^*M \\
&(z,x,p^0,p) \mapsto \bigg( (z,p^0) , \sum_{j=1}^d \langle p , E_j(x) \rangle_{\mathbb{R}^n} \ E_j(x) \bigg)
\end{split}
\end{equation*}
\endgroup
is well-defined and smooth. Moreover, since the change of frame mapping between two orthonormal frames is orthogonal, from \eqref{ref:Span} it is readily checked that $\Pi$ is globally defined. Equipped with the GPMP and orthogonal projections, the numerical strategy to solve OCP$^M$ detailed above becomes meaningful and justified by the following convergence result (similar to the discussion for Theorem \ref{theo_main}, the convergences stated therein readily extend to the discretized setting).

\begin{thm}[{\small Convergence for SCP with manifold constraints}] \label{theo_mainGeo}
Assume that $(A_1)$, $(A_2)$, and $(A_3)$ hold\revThird{, and that applying Algorithm \ref{algo_SCP} to OCP$^M$ when manifold-type constraints are dropped returns a sequence $(\Delta_k,u_k,x_k)_{k \in \mathbb{N}}$ such that $\Delta_k \rightarrow 0$ and, for every $k \in \mathbb{N}$, the tuple $(u_{k+1},x_{k+1})$} locally solves LOCP$^{\Delta}_{k+1}$. Then there exists a tuple \revThird{$(x^*,p,p^0,u^*)$} that is an extremal for OCP$^M$ when manifold-type constraints are dropped and satisfies all the statements listed in Theorem \ref{theo_main}\revThird{, if $(p(t_f),p^0) \neq 0$} (where the convergence of $(u_k)_{k \in \mathbb{N}}$ for the strong topology of $L^2$ may be replaced by the weak topology of $L^2$ whenever $(A_2)$ holds \revThird{and, for $k \ge 1$, each multiplier $(x_k,p_k,p^0_k,p^1_k,u_k)$ for LOCP$^{\Delta}_k$ satisfies $p^0_k \neq 0$}). In addition, the limiting trajectory satisfies $x^*(s) \in M$, \revThird{$s \in [0,t_f]$}, and by defining the absolutely continuous curve
\begin{equation} \label{eq_lambda}
\hspace{-2.5ex} \revThird{\lambda : [0,t_f] \rightarrow T^*M : t \mapsto \pi_2\Big( \Pi\big(z^*(t),x^*(t),p^0,p(t)\big) \Big) ,}
\end{equation}
where $\pi_2 : T^*(\mathbb{R} \times M) \rightarrow T^*M : ((z,p^0),\xi) \mapsto \xi$ and \revThird{$z^* : [0,t_f] \rightarrow \mathbb{R}$} satisfies $\dot{z}(s) = f^0(s,x^*(s),u^*(s))$, $z(0) = 0$, the tuple \revThird{$(\lambda,p^0,u^*)$} is a geometric extremal for OCP$^M$.
\end{thm}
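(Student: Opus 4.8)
The plan is to manufacture the geometric extremal by orthogonally projecting onto $T^*M$ the Euclidean extremal already supplied by Theorem \ref{theo_main}. First I would invoke Theorem \ref{theo_main} for OCP$^M$ with the manifold-type constraints dropped: under $(A_1)$, $(A_2)$ (and $(A_3)$ in the weak-topology variant) this yields the extremal $(t^*_f,x^*,p,p^0,u^*)$ for OCP together with every convergence statement claimed, so that part of the thesis is immediate. Next I would establish that the limiting trajectory stays on the manifold: since $x^0\in M$ and, by $(A_4)$, the time-dependent field $f(s,\cdot,u^*(s))$ is tangent to $M$, the unique solution of \eqref{eq_geoDyn} is confined to $M$ by the standard invariance of a submanifold under a tangent vector field, whence $x^*(s)\in M$ on $[0,t^*_f]$. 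I then define $\lambda$ exactly as in \eqref{eq_lambda}; concretely $\lambda(s)$ is the component of the Euclidean costate $p(s)$ tangent to $M$ (the conormal part being annihilated by $\pi_2\circ\Pi$), while the cost pair $(z^*,p^0)$ is transported unchanged. Since $p$ is absolutely continuous, $x^*$ is absolutely continuous, and $\Pi$ is smooth, $\lambda$ is an absolutely continuous section of $T^*M$ with $\pi(\lambda(s))=x^*(s)$. It then remains to check the four conditions of the GPMP (Theorem \ref{theo_PMPGeo}) for $(t^*_f,\lambda,p^0,u^*)$.

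Two of these conditions transfer almost for free from the crucial identity that the geometric and Euclidean Hamiltonians agree along the trajectory. Indeed, by $(A_4)$ we have $f(s,x^*(s),v)\in T_{x^*(s)}M$ for every $v\in U$, so the conormal part of $p(s)$ pairs to zero against it and $\langle\lambda(s),f_M(s,x^*(s),v)\rangle=p(s)^\top f(s,x^*(s),v)$; adding $p^0 f^0$ gives $H(s,\lambda(s),p^0,v)=H(s,x^*(s),p(s),p^0,v)$ for all $v$. The maximality condition and the free-final-time transversality relations---both phrased through $\max_{v\in U}H$---therefore carry over verbatim from Theorem \ref{theo_main}. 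The endpoint transversality transfers just as cleanly: writing $g_M=g|_M$ one has $\ker\frac{\partial g_M}{\partial x}(x^*(t^*_f))=T_{x^*(t^*_f)}M\cap\ker\frac{\partial g}{\partial x}(x^*(t^*_f))$, and since the Euclidean condition gives $p(t^*_f)\perp\ker\frac{\partial g}{\partial x}$, the tangential part $\lambda(t^*_f)$ annihilates this intersection, i.e. $\lambda(t^*_f)\perp\ker\frac{\partial g_M}{\partial x}$.

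The heart of the proof, and the step I expect to be the main obstacle, is the adjoint equation: showing that the projected costate $\lambda$ is the cotangent lift generated by $\overset{\rightarrow}{H}$ on $T^*(M\times\mathbb{R})$. Rather than differentiating the Gram--Schmidt frame \eqref{ref:Span} directly---which would introduce second-fundamental-form terms---I would argue by costate propagation in the extended phase space $M\times\mathbb{R}$ carrying the cost coordinate $z^*$. Let $\tilde\Phi(t,s)$ denote the forward linearized flow along $(x^*,z^*)$ of the extended dynamics $(f,f^0)$. Because the flow of $f$ preserves $M$ (tangency, $(A_4)$) and the cost direction is free, $\tilde\Phi(t,s)$ maps $T_{x^*(s)}M\times\mathbb{R}$ onto $T_{x^*(t)}M\times\mathbb{R}$. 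The extended Euclidean costate is the pull-back $\big(p(s),p^0\big)=\tilde\Phi(t^*_f,s)^\top\big(p(t^*_f),p^0\big)$, so for any $(w,w^0)\in T_{x^*(s)}M\times\mathbb{R}$, using that $\tilde\Phi(t^*_f,s)(w,w^0)$ is again tangent, the conormal part of $p(t^*_f)$ drops out and $\langle(p(s),p^0),(w,w^0)\rangle=\langle(\lambda(t^*_f),p^0),\tilde\Phi(t^*_f,s)(w,w^0)\rangle$. The right-hand side is precisely the pull-back of $(\lambda(t^*_f),p^0)$ under the cotangent lift of the flow of $f_M$ on $M$; reading the left-hand side as the restriction of $(p(s),p^0)$ to $T_{x^*(s)}M\times\mathbb{R}$, this says $(\lambda(s),p^0)$ equals that geometric pull-back for every $s$, i.e. $\lambda$ solves $\frac{d(\lambda,p^0)}{ds}=\overset{\rightarrow}{H}(s,\lambda(s),p^0,u^*)$ on $T^*M$. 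The same tangency of $\tilde\Phi$ shows that every Euclidean Pontryagin variation $\tilde z^{r,v}_{u^*}(t^*_f)$ from \eqref{ref:varOCP} has tangent spatial part, so the inequality in \eqref{ref:variationalOCP} restricts to the geometric variational inequality for $(\lambda(t^*_f),p^0)$, which is consistent with and reinforces the above.

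It remains to secure non-triviality $(\lambda,p^0)\neq 0$, the one point where projection could in principle erase information. If $p^0\neq 0$ this is automatic. If $p^0=0$, then the normalized Euclidean non-triviality forces the terminal multiplier $\mathfrak{p}\neq 0$, so $p(t^*_f)=\mathfrak{p}\frac{\partial g}{\partial x}(x^*(t^*_f))\neq 0$; its tangential part $\lambda(t^*_f)$ is nonzero precisely when $\mathfrak{p}\frac{\partial g_M}{\partial x}(x^*(t^*_f))\neq 0$, which holds under the natural transversality of $g^{-1}(0)$ and $M$ (regularity of $g_M$ at $x^*(t^*_f)$, the consistency condition accompanying $g^{-1}(0)\cap M\neq\emptyset$). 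As $\lambda$ obeys the linear geometric adjoint equation, $\lambda(t^*_f)\neq 0$ propagates to $\lambda\not\equiv 0$ on $[0,t^*_f]$, completing non-triviality and hence the verification that $(t^*_f,\lambda,p^0,u^*)$ is a geometric extremal for OCP$^M$. The decisive ingredient throughout is the invariance $\tilde\Phi(t,s)\big(T_{x^*(s)}M\times\mathbb{R}\big)=T_{x^*(t)}M\times\mathbb{R}$, which converts every intrinsic claim into a transparent pull-back identity.
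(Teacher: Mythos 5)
Your proposal is correct and shares the paper's skeleton: invoke Theorem \ref{theo_main} for the problem with the manifold constraint dropped, use $(A_4)$ to confine $x^*$ to $M$, define $\lambda$ by the projection \eqref{eq_lambda}, and transfer the maximality, free-final-time, and endpoint-transversality conditions through the pairing identity $\langle\lambda(s),w\rangle=p(s)^{\top}w$ for $w\in T_{x^*(s)}M$ (the paper's \eqref{eq_perpProof}) together with $T_{x}g_M^{-1}(0)=T_{x}g^{-1}(0)\cap T_{x}M$. Where you genuinely depart is in the two technical steps. For the adjoint equation, the paper proves Lemma \ref{lemma_hamilton} by an explicit computation in charts adapted to $\mathbb{R}\times M$: it expands the pullback $(\exp(t_f;t,\cdot))^*_{(z,x)(t_f)}\cdot(p^0,p(t_f))$ in the coordinate coframe, observes that components of index greater than $d$ never couple into the tangential ones, and differentiates the coefficients. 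Your argument is the coordinate-free rendition of the same mechanism: $(A_4)$ makes the linearized extended flow map $T_{x^*(s)}M\times\mathbb{R}$ onto $T_{x^*(t)}M\times\mathbb{R}$, so restricting the Euclidean pullback identity to tangent vectors identifies $(\lambda(s),p^0)$ with the cotangent-lift pullback of $(\lambda(t^*_f),p^0)$ along the flow of $(f^0,f_M)$, and the classical symplectic characterization of Hamiltonian flows (which the paper also invokes in order to reduce to its lemma) concludes; this spares the adapted-chart bookkeeping at the price of using the cotangent-lift formalism as a black box. For non-triviality, the paper argues by contradiction, combining the Grassmann splitting \eqref{eq_Grassmann} with \eqref{eq_perpProof} to force $p(t^*_f)=0$ and hence $(p,p^0)=0$; you argue directly by cases on $p^0$, via $\lambda(t^*_f)=\mathfrak{p}\,d(g_M)_{x^*(t^*_f)}$ and surjectivity of $d(g_M)_{x^*(t^*_f)}$. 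The two are equivalent: that surjectivity is exactly the transversality $T_{x^*(t^*_f)}g^{-1}(0)+T_{x^*(t^*_f)}M=\mathbb{R}^n$ underlying \eqref{eq_Grassmann}, an assumption the paper makes implicitly when it asserts that $g_M^{-1}(0)$ is a $(d-\ell_g)$-dimensional submanifold; your write-up simply makes this hypothesis visible. There is no gap in your argument.
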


\subsection{Proof of the Convergence Result}

Let \revThird{$(x^*,p,p^0,u^*)$} be an extremal for OCP$^M$ in the case where manifold-type constraints are dropped, whose existence is guaranteed by Theorem \ref{theo_main} \revThird{(in particular, we assume that $(p(t_f),p^0) \neq 0$)}. To avoid overcharging the notation, in the rest of this section we denote \revThird{$(x,p,p^0,u) \triangleq (x^*,p,p^0,u^*)$}. Because $(A_3)$ implies that $x(s) \in M$, $s \in [0,t_f]$, Theorem \ref{theo_mainGeo} is proved once we show that the tuple \revThird{$(\lambda,p^0,u)$} with $\lambda$ built as in \eqref{eq_lambda} satisfies the non-triviality condition, the adjoint equation, the maximality condition, and the transversality condition of Theorem \ref{theo_PMPGeo}. For the sake of clarity, we denote $\displaystyle dg_x = \frac{\partial g}{\partial x}(x)$, $\displaystyle d(g_M)_x = \frac{\partial g_M}{\partial x}(x)$.

\subsubsection{Adjoint Equation}

Before getting started, we introduce some fundamental notations. For every $(t_0,z_0,p_0) \in [0,t_f] \times \mathbb{R}^{n+1}$, the differential equation
\begin{equation} \label{eq_dynSyst}
    \dot{z}(s) = f^0(s,x(s),u(s)) , \ \dot{x}(s) = f(s,x(s),u(s))
\end{equation}
with $z(t_0) = z_0$, $x(t_0) = p_0$ has a unique solution, which may be extended to the whole interval $[0,t_f]$. We denote by $\exp : [0,t_f]^2 \times \mathbb{R}^{n+1} \rightarrow \mathbb{R}^{n+1}$ the flow of \eqref{eq_dynSyst}, i.e., $\exp(\cdot;t_0,(z_0,p_0))$ solves \eqref{eq_dynSyst} with initial condition $(z_0,p_0)$ at time $t_0$. As a classical result, for every $(t,t_0) \in [0,t_f]^2$, the mapping $\exp(t;t_0,\cdot) : \mathbb{R}^{n+1} \rightarrow \mathbb{R}^{n+1}$ is a diffeomorphism. With this notation at hand, one can show that the solution $p$ to the adjoint equation of Theorem \ref{theo_main} is such that for $s \in [0,t_f]$,
\begin{equation} \label{eq_expAdj}
    (p^0,p(s)) = (\exp(t_f;s,\cdot))^*_{(z,x)(t_f)} \cdot (p^0,p(t_f)),
\end{equation}
where we denote $(z,x)(t) \triangleq \exp(t,0;(0,x^0))$ and $(\cdot)^*$ denotes the pullback operator of 1-forms in $\mathbb{R}^{n+1}$ (see, e.g., \cite{AgrachevSachkov2004}). At this step, to prove that $(\lambda,p^0)$ satisfies the adjoint equation of Theorem \ref{theo_mainGeo} with $\lambda$ defined in \eqref{eq_lambda} and $(p^0,p)$ satisfying \eqref{eq_expAdj}, we can leverage classical results from symplectic geometry in the context of Hamiltonian equations (see, e.g., \cite{AgrachevSachkov2004}) from which it is sufficient to prove the following lemma:

\begin{lmm}[Projections of solutions to Hamiltonian systems] \label{lemma_hamilton}
For almost every $t \in [0,t_f]$, let $(V,\varphi) = (V,y^0,\dots,y^n)$ be a local chart of $(z,x)(t) \triangleq \exp(t,0;(0,x^0))$ (which is a point in $\mathbb{R} \times M$ due to $(A_4)$) in $\mathbb{R}^{n+1}$ adapted to $\mathbb{R} \times M$. For every $i=0,\dots,d$, it holds that
{\small
\begin{align*}
&\frac{d}{ds} \bigg( \Pi \Big( (\exp(t_f;s,\cdot))^*_{(z,x)(t_f)} \cdot (p^0,p(t_f)) \Big) \bigg( \frac{\partial}{\partial y^i}\big( (x,z)(s) \big) \bigg) \bigg)(t) \\
&= -\sum_{j=0}^d \frac{\partial (f^0,f_M)_j}{\partial y^i}(t,x(t),u(t)) \ \cdot \\
&\qquad \qquad \Pi \Big( (\exp(t_f;t,\cdot))^*_{(z,x)(t_f)} \cdot (p^0,p(t_f)) \Big)
\bigg( \frac{\partial}{\partial y^j}\big( (x,z)(t) \big) \bigg) ,
\end{align*}}
where $(\cdot)^*$ denotes the pullback operator of 1-forms in $\mathbb{R}^{n+1}$.
\end{lmm}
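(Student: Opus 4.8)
The plan is to prove Lemma \ref{lemma_hamilton} by writing the claimed identity in the adapted chart and showing that the orthogonal projection $\Pi$ transports the Euclidean adjoint flow \eqref{eq_expAdj} onto the geometric Hamiltonian adjoint flow on $T^*M$. Throughout I abbreviate $\nu(s) \triangleq (\exp(t_f;s,\cdot))^*_{(z,x)(t_f)}\cdot(p^0,p(t_f))$, which by \eqref{eq_expAdj} is exactly the Euclidean co-state $(p^0,p(s))$ carried along the trajectory, and I write $X\triangleq(f^0,f)$ for the vector field generating \eqref{eq_dynSyst}. First I would record the coordinate form of the Euclidean adjoint equation satisfied by $\nu$. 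Denoting by $\nu_a(s)\triangleq \nu(s)\big(\tfrac{\partial}{\partial y^a}((x,z)(s))\big)$ the components of $\nu$ in the chart, the flow-pullback characterization of the co-state gives, in \emph{any} coordinate system, $\dot\nu_a = -\sum_{b=0}^n \frac{\partial X^b}{\partial y^a}\,\nu_b$. This is the standard variational computation: $\langle \nu(s),W(s)\rangle$ is constant in $s$ whenever $W$ solves the variational equation $\dot W^a = \sum_b \frac{\partial X^a}{\partial y^b} W^b$ of the flow, and letting $W$ range over a frame of such solutions yields the component ODE; no Christoffel terms appear because the identity is read off directly from the coordinate expression of the flow.

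Next I would invoke $(A_4)$. Since the chart is adapted to $\mathbb{R}\times M$, the submanifold is $\{y^{d+1}=\dots=y^n=0\}$, and $(A_4)$ forces the components $X^b$ with $d<b\le n$ (the directions normal to $M$) to vanish identically on $\mathbb{R}\times M$. Hence for every tangential index $i\le d$ the tangential derivative $\partial X^b/\partial y^i$ vanishes for all $b>d$, because a function identically zero on the submanifold has vanishing derivative along it. The component ODE therefore truncates, for $i\le d$, to $\dot\nu_i = -\sum_{j=0}^d \frac{\partial (f^0,f_M)_j}{\partial y^i}\,\nu_j$, which already has the exact shape of the identity in Lemma \ref{lemma_hamilton}, but phrased with the \emph{unprojected} components $\nu_j$ rather than $\Pi(\nu)(\partial/\partial y^j)$.

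The hard part will be replacing $\nu_j$ by the projected components $\mu_j\triangleq \Pi(\nu(s))\big(\tfrac{\partial}{\partial y^j}\big)$. Two complications arise: $\Pi$ is built from the \emph{position-dependent} Gram--Schmidt coframe $\{E_j\}$ of \eqref{ref:Span}, so $\mu_j$ generally differs from $\nu_j$ by a cometric-weighted contribution of the normal part of $\nu$; and differentiating $\mu_j$ along the trajectory produces additional terms from $\dot E_j$. The crux is to show that these two families of extra terms cancel, leaving $\dot\mu_i = -\sum_{j=0}^d \frac{\partial (f^0,f_M)_j}{\partial y^i}\,\mu_j$. The cleanest route I would take exploits the freedom in the chart granted by the statement (``let $(V,\varphi)$ be a local chart \dots adapted''): fixing the instant $t$, I would choose the adapted chart so that its coordinate coframe coincides at $(z,x)(t)$ with the orthonormal frame $\{E_a\}$ --- i.e.\ normal-type coordinates adapted to $\mathbb{R}\times M$. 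At that point the cometric is Euclidean, so $E_j=dy^j$ and $\mu_j=\nu_j$, and the first-order data governing $\dot\mu_i(t)$ reduce to those of $\dot\nu_i(t)$; substituting into the truncated ODE of the previous step delivers the claimed identity at $t$. Since almost every $t$ is admissible, this proves the lemma.

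Finally, I would justify that verifying the identity in such a convenient chart suffices by recalling that Lemma \ref{lemma_hamilton} is merely the coordinate transcription of the intrinsic geometric adjoint equation $\frac{d(\lambda,p^0)}{ds}=\overset{\rightarrow}{H}(s,\lambda,p^0,u)$ of Theorem \ref{theo_PMPGeo}, which is coordinate-covariant. Conceptually the whole statement expresses that $\Pi$ intertwines the Euclidean Hamiltonian flow restricted to $T^*\mathbb{R}^{n+1}|_{\mathbb{R}\times M}$ with the geometric Hamiltonian flow on $T^*M$; this is consistent because $(A_4)$ makes $\exp$ preserve $\mathbb{R}\times M$, so its pullback preserves the conormal bundle of $\mathbb{R}\times M$ and descends to the cotangent lift of $\exp|_{\mathbb{R}\times M}$. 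I expect the genuine obstacle to lie precisely in the commutation of the metric projection $\Pi$ with the differentiation $d/ds$ --- equivalently, in reconciling the orthogonal representative $\Pi(\nu)$ with the intrinsic restriction $\nu|_{T(\mathbb{R}\times M)}$ --- and the normal-adapted-chart choice above is the device I would use to neutralize the $s$-dependence of the Gram--Schmidt frame.
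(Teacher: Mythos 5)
Your first two steps are correct, and they are in fact a cleaner packaging of the mechanism the paper uses: the paper never invokes the general coordinate costate ODE $\dot\nu_a=-\sum_b \frac{\partial X^b}{\partial y^a}\nu_b$; instead it expands the pulled-back costate in the coordinate coframe, evaluates the \emph{constant} covector $(p^0,p(t_f))$ on tangential coordinate vectors at the endpoint (the normal-index terms being killed by $(A_4)$, since the flow preserves $\mathbb{R}\times M$ and both charts are adapted), and then differentiates that identity in $t$ to produce \eqref{ref:eqFinalLemma}. Your route — write the component adjoint equation in the adapted chart, then truncate it using the fact that $X^b$ vanishes on $\mathbb{R}\times M$ for normal indices $b>d$ so that its tangential derivatives vanish — reaches the same intermediate identity $\dot\nu_i=-\sum_{j=0}^{d}\frac{\partial (f^0,f_M)_j}{\partial y^i}\,\nu_j$, $i\le d$, more directly, and correctly isolates where the real difficulty lies: relating the raw components $\nu_j$ to the projected components $\mu_j=\Pi(\nu)\bigl(\partial/\partial y^j\bigr)$.

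However, your device for closing that last step has a genuine gap. Choosing the adapted chart so that its coframe coincides with the Gram--Schmidt frame \emph{at the single point} $(z,x)(t)$ gives $\mu_j(t)=\nu_j(t)$, but not $\dot\mu_i(t)=\dot\nu_i(t)$: writing $\nu_i-\mu_i=\sum_{j>d}\langle\nu,E_j\rangle\,E_j\bigl(\partial/\partial y^i\bigr)$, each summand vanishes at $s=t$, yet its $s$-derivative contains $\langle\nu(t),E_j(t)\rangle\cdot\frac{d}{ds}\bigl[E_j\bigl(\partial/\partial y^i\bigr)\bigr](t)$, and the normal components $\langle\nu,E_j\rangle$ of the costate are nonzero in general. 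A pointwise normalization cannot kill these terms: for a curved $M$ no adapted chart makes the ambient metric Euclidean to first order at a point (that would force the second fundamental form of the coordinate slice, i.e.\ of $M$, to vanish there), so ``the first-order data governing $\dot\mu_i(t)$ reduce to those of $\dot\nu_i(t)$'' does not follow from what you arranged. The repair is to demand orthogonal adaptedness \emph{along $M$ in a neighborhood} (Fermi-type coordinates): then $E_j=dy^j$ for $j>d$ at every point of $\mathbb{R}\times M$ near $(z,x)(t)$, hence $E_j\bigl(\partial/\partial y^i\bigr)\equiv 0$ along the trajectory (which stays in $\mathbb{R}\times M$ by $(A_4)$), so $\mu_i\equiv\nu_i$ and the derivatives agree. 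Simpler still: if $\Pi$ is read as the metric orthogonal projection (dualize, project onto $T(\mathbb{R}\times M)$, dualize back), then $\Pi(\nu)$ agrees with $\nu$ on every vector tangent to $\mathbb{R}\times M$, so $\mu_i=\nu_i$ for all $i\le d$ in \emph{every} adapted chart and no special chart is needed; this is exactly the point the paper compresses into its claim $a_j(t)=b_j(t)$ ``from \eqref{ref:Span}''. Note finally the tension in your closing argument: chart-covariance of the identity holds only under this invariant reading of $\Pi$ (under a literal chart-by-chart Gram--Schmidt definition it fails), and under the invariant reading your ``hard part'' is immediate — so the covariance appeal and the special-chart device cannot both be doing work.
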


\begin{proof}
For indices $i=0,\dots,n$, we denote
{\small
$$
a_i(t) = \Pi \Big( (\exp(t_f;t,\cdot))^*_{(z,x)(t_f)} \cdot (p^0,p(t_f)) \Big)
\bigg( \frac{\partial}{\partial y^j}\big( (x,z)(t) \big) \bigg) .
$$}
Since by the definition of the pullback it holds that
{\small
\begin{equation} \label{ref:linearCombinationAdjoint}
    (\exp(t_f;t,\cdot))^*_{(z,x)(t_f)} \cdot (p^0,p(t_f)) = \sum_{j=0}^n b_j(t) dy^j\big((z,x)(t)\big)
\end{equation}}

\noindent for appropriate coefficients $b_j(t)$, $j = 0,\dots,n$, from \eqref{ref:Span},

{\small
\begin{equation*}
    \Pi \big( (\exp(t_f;t,\cdot))^*_{(z,x)(t_f)} \cdot (p^0,p(t_f)) \big) = \sum_{j=0}^d b_j(t) dy^j\big((z,x)(t)\big) ,
\end{equation*}}
which yields $a_j(t) = b_j(t)$ for every $j = 0,\dots,d$. Therefore, by inverting \eqref{ref:linearCombinationAdjoint}, we obtain
\begin{align*}
(p^0,p(t_f)) &= \sum_{j=0}^d a_j(t) (\exp(t;t_f,\cdot))^*_{(z,x)(t)} \cdot dy^j\big( (z,x)(t) \big) \\
&+ \sum_{j=d+1}^n b_j(t) (\exp(t;t_f,\cdot))^*_{(z,x)(t)} \cdot dy^j\big( (z,x)(t) \big) .
\end{align*}
Now, let $(A,\alpha) = (A,w^0,\dots,w^n)$ be a local chart of $(z,x)(t_f)$ in $\mathbb{R}^{n+1}$ adapted to $\mathbb{R} \times M$. Since due to $(A_4)$, the trajectory $(z,x)(t)$ lies entirely in $\mathbb{R} \times M$ and the chart $(V,\varphi)$ is adapted to $\mathbb{R} \times M$, for every $i=0,\dots,d$ and every $j \ge d+1$, one computes
\begin{equation*}
\begin{split}
&(\exp(t;t_f,\cdot))^*_{(z,x)(t)} \cdot dy^j\big( (z,x)(t) \big) \bigg( \frac{\partial}{\partial w^i}\big( (z,x)(t_f) \big) \bigg) \\
&= \frac{\partial}{\partial w^i} (y^j \circ \exp(t;t_f,\cdot) \circ \alpha^{-1})\big( \alpha((z,x)(t_f)) \big) = 0 .
\end{split}
\end{equation*}
This implies that for every $i=0,\dots,d$,
\begin{multline*}
(p^0,p(t_f)) \bigg( \frac{\partial}{\partial w^i}\big( (z,x)(t_f) \big) \bigg) = \\
\sum_{j=0}^d a_j(t) \frac{\partial}{\partial w^i} (y^j \circ \exp(t;t_f,\cdot) \circ \alpha^{-1})\big( \alpha((z,x)(t_f)) \big) .
\end{multline*}
The term on the left-hand side does not depend on $t$. Therefore, a differentiation with respect to $t$ together with \eqref{eq_expAdj} lead to\footnote{Note that, as long as $i = 0,\dots,d$, quantities in \eqref{ref:eqFinalLemma} evolve in $\mathbb{R} \times M$. Therefore, indices greater than $d$ do not explicitly appear in calculations.}
\begingroup
\begin{multline} \label{ref:eqFinalLemma}
\sum_{j=0}^d \bigg[ \dot{a}_j(t) \bigg( (\exp(t;t_f,\cdot))^*_{(z,x)(t)} \cdot \\
dy^j\big( (z,x)(t) \big) \bigg( \frac{\partial}{\partial w^i}\big( (z,x)(t_f) \big) \bigg) \bigg) \\
+ \sum_{\ell=0}^d a_j(t) \frac{\partial (f^0,f)_j}{\partial y^{\ell}}(t,x(t),u(t)) \bigg( (\exp(t;t_f,\cdot))^*_{(z,x)(t)} \cdot \\
dy^{\ell}\big( (z,x)(t) \big) \bigg( \frac{\partial}{\partial w^i}\big( (z,x)(t_f) \big) \bigg) \bigg) \bigg] = 0,
\end{multline}
\endgroup
which must hold for every $i=0,\dots,n$. At this step, we notice that due to $(A_4)$, for every $j=0,\dots,d$ and every $\ell=0,\dots,d$, we have $\displaystyle \frac{\partial (f^0,f)_j}{\partial y^{\ell}}(t,x(t),u(t)) = \displaystyle \frac{\partial (f^0,f_M)_j}{\partial y^{\ell}}(t,x(t),u(t))$. Moreover, due to $(A_4)$, the restriction $\exp(t;t_f,\cdot) : \mathbb{R} \times M \rightarrow \mathbb{R} \times M$ is well-defined and is a diffeomorphism. Hence $(\exp(t;t_f,\cdot))^*$ is an isomorphism when restricted to 1-forms in $T^*M$. Combining those with \eqref{ref:eqFinalLemma} gives
\begingroup
\begin{multline*}
\sum_{j=0}^d \bigg[ \dot{a}_j(t) dy^j\big( (z,x)(t) \big) \\
+ \sum_{\ell=0}^d a_j(t) \frac{\partial (f^0,f_M)_j}{\partial y^{\ell}}(t,x(t),u(t)) dy^{\ell}\big( (z,x)(t) \big) \bigg] = 0
\end{multline*}
\endgroup
and the conclusion follows.
\end{proof}

\subsubsection{Maximality, Transversality and Non-Triviality Conditions}

Before getting started, consider the following analysis of tangent spaces. From $(A_3)$ and the definition of $g_M$, it holds that $x(t_f) \in g^{-1}(0) \cap M = g^{-1}_M(0)$. Note that $g^{-1}(0) \subseteq \mathbb{R}^n$ and $g^{-1}_M(0) \subseteq M$ are submanifolds of dimension $n-\ell_g$ and $d-\ell_g$, respectively, with tangent spaces given by
$$
T_x g^{-1}(0) = \{ v \in T_x \mathbb{R}^n \cong \mathbb{R}^n : dg_x(v) = 0 \} , \ x \in g^{-1}(0)
$$
$$
T_x g^{-1}_M(0) = \{ v \in T_x M : d(g_M)_x(v) = 0 \} , \quad x \in g^{-1}_M(0) .
$$
In particular, by subspace identification, for every $x \in g^{-1}_M(0)$, one has $T_x g^{-1}_M(0) \subseteq T_x g^{-1}(0) \cap T_x M$. The inclusion above is actually an identity. To see this, let $x \in g^{-1}_M(0) \subseteq M$ and $(V,\varphi) = (V,y^1,\dots,y^n)$ be a local chart of $x$ in $\mathbb{R}^n$ adapted to $M$. The definition of adapted charts immediately gives that $\displaystyle dg_x\left( \frac{\partial}{\partial y^j}(x) \right) = d(g_M)_x\left( \frac{\partial}{\partial y^j}(x) \right)$, for $j = 1,\dots,d$. Thus, if $\displaystyle v = \sum^d_{j = 1} v^j \frac{\partial}{\partial y^j}(x) \in T_x M$ such that $dg_x(v) = 0$, we have
$$
d(g_M)_x(v) = \sum^d_{j = 1} v^j d(g_M)_x\left( \frac{\partial}{\partial y^j}(x) \right) = dg_x(v) = 0,
$$
and the sought after identity follows. A straightforward application of Grassmann's formula to this identity yields
\begin{equation} \label{eq_Grassmann}
    \mathbb{R}^n = T_{x(t_f)} g^{-1}(0) + T_{x(t_f)} M .
\end{equation}

\revThird{Noticing that the maximality condition is a straightforward consequence of $(A_3)$, we are now ready to prove the transversality condition and the non-triviality condition.}

To show the validity of the transversality condition, let us prove that for every $v \in T_{x(t_f)} M \subseteq \mathbb{R}^n$, it holds that
\begin{equation} \label{eq_perpProof}
    \langle \lambda(t_f) , v \rangle = p(t_f)^{\top} v ,
\end{equation}
which provides the desired result because $p(t_f) \cdot v = 0$ for $v \in T_{x(t_f)} g^{-1}(0)$, due to Theorem \ref{theo_main}. To show this, by the Gram-Schmidt process, we can build a local orthonormal frame $\{ E_j(\cdot) \}_{j=1,\dots,n}$ for $T^*_{x(t_f)} \mathbb{R}^n$ around $x(t_f)$ such that $\{ E_j(\cdot) \}_{j=1,\dots,d}$ is a local frame for $T^*_{x(t_f)} M$ around $x(t_f)$. The dual frames $\{ E^*_j(x(t_f)) \}_{j=1,\dots,n}$ and $\{ E^*_j(x(t_f)) \}_{j=1,\dots,d}$ span $T^{**}_{x(t_f)} \mathbb{R}^n \cong T_{x(t_f)} \mathbb{R}^n \cong \mathbb{R}^n$ and $T^{**}_{x(t_f)} M \cong T_{x(t_f)} M$, respectively. Thus, for any tangent vector $v \in T_{x(t_f)} M$, the definitions of the dual frame and of the orthogonal projection $\Pi$ allow us to conclude that
\begin{align*}
    &\langle \lambda(t_f) , v \rangle = \left\langle \sum_{j=1}^d \langle p , E_j(x) \rangle_{\mathbb{R}^n} \ E_j(x) , \sum_{j=1}^d v^j \ E^*_j(x) \right\rangle \\
    & = \left\langle \sum_{j=1}^n \langle p , E_j(x) \rangle_{\mathbb{R}^n} \ E_j(x) , \sum_{j=1}^d v^j \ E^*_j(x) \right\rangle = p(t_f)^{\top} v .
\end{align*}

Finally, let us focus on the non-triviality condition. By contradiction, assume that there exists $t \in [0,t_f]$ such that $(\lambda(t),p^0) = 0$. The linearity of the adjoint equation yields $\lambda(s) = 0$ for all $s \in [0,t_f]$, so that $\lambda(t_f) = 0$. On the other hand, from the transversality condition of Theorem \ref{theo_main}, we know that $p(t_f) \perp T_{x(t_f)} g^{-1}(0)$. Now, given $v \in \mathbb{R}^n$, from \eqref{eq_Grassmann} we infer that $v = v_1 + v_2$ with $v_1 \in T_{x(t_f)} g^{-1}(0)$ and $v_2 \in T_{x(t_f)} M$ so that from \eqref{eq_perpProof}, one obtains
$$
p(t_f)^{\top} v = p(t_f)^{\top} v_2 = \langle \lambda(t_f) , v_2 \rangle = 0 .
$$
This leads to $(p,p^0) = 0$, in contradiction with the non-triviality condition of Theorem \ref{theo_main}. The conclusion follows.

% \section{\revThird{Extension to Problems with Free Final Time}}
\section{Extension to Problems with Free Final Time}

\revThird{Algorithm \ref{algo_SCP} and related convergence guarantees, i.e., Theorems \ref{theo_main} and \ref{theo_mainGeo}, can be suitably extended to the case of OCP (and OCP$^M$) with free final time $t_f \in [0,T]$, where $T > 0$ is a given upper bound, as we quickly summarize in this section.}

\revThird{By adapting the notation introduced in Section \ref{sec_noManif} accordingly, each convex problem LOCP$^{\Delta}_k$ can be formulated as presented in Section \ref{sec_convexProb}, except for adding the additional trust-region constraint $| t^{k+1}_f - t^k_f | \le \Delta_{k+1}$ to \eqref{eq_trust}. For what concerns the theoretical analysis of the convergence of SCP in such setting, our original assumptions require some major modification. Specifically, the presence of free final times adds an additional pointwise transversality condition in Theorems \ref{theo_PMP}, \ref{theo_PMPk}, and \ref{theo_PMPGeo}, which in turn requires to replace $(A_2)$ with the following stronger requirement:}
\revThird{\begin{Itemize}
    \item[$(\tilde A_2)$] For every $k \in \mathbb{N}$, the following conditions hold:
    \begin{itemize}
        \item any optimal solution $(t^{k+1}_f,x_{k+1},u_{k+1})$ to LOCP$^{\Delta}_{k+1}$ satisfies:
        $$
        | t^{k+1}_f - t^k_f | + \int^T_0 \| x_{k+1}(s) - x_k(s) \|^2 \; \mathrm{d}s < \Delta_{k+1} ;
        $$
        \item any optimal control $u_{k+1}$ to LOCP$^{\Delta}_{k+1}$ is continuous.
    \end{itemize}
\end{Itemize}}
\noindent \revThird{In \cite{BonalliArxiv2021}, through an analysis which is similar to the one proposed in the present paper, under $(\tilde A_2)$ we showed that the statements of Theorems \ref{theo_main} and \ref{theo_mainGeo} still hold when the final time $t_f$ in OCP and OCP$^M$, respectively, is let free (moreover, due to $(\tilde A_2)$ the conditions $(p(t_f),p^0) \neq 0$ and $p^0_k \neq 0$ for every $k \ge 1$ are no longer required).}

\revThird{Due to lack of space, we do not report the proof of these latter results, referring the reader to \cite{BonalliArxiv2021} for further details.}

\section{Accelerating Convergence through \\ Indirect Shooting Methods}\label{sec_acceleratingProc}

An important result provided by Theorem \ref{theo_main} (and consequently by Theorem \ref{theo_mainGeo}) is the convergence of the sequence of the extremals (related to the sequence of convex subproblems) towards an extremal for the (penalized) original formulation. This can be leveraged to accelerate the convergence of Algorithm \ref{algo_SCP} by warm-starting indirect shooting methods \cite{betts1993,trelat2012}. Indirect shooting methods consist of replacing the original optimal control problem with a two-point boundary value problem formulated from the necessary conditions for optimality stated by the PMP. When indirect shooting methods succeed in converging to a locally-optimal solution, they converge very quickly (quadratically, in general). Nevertheless, they are very sensitive to initialization, which often presents a difficult challenge (see, e.g., \cite{trelat2012,bonalli2020}). In the following, with the help of Theorem \ref{theo_main}, we show how the initialization of indirect shooting methods may be bypassed by extracting information from the multipliers at each SCP iteration. The resulting indirect shooting methods may thus be combined with SCP to decrease (sometimes drastically decrease) the total number of iterations, given that the convergence rate of SCP is in general lower than that provided by shooting methods (see, e.g., \cite{diehl2019}). For the sake of clarity, we drop manifold-type constraints, knowing from Theorem \ref{theo_mainGeo} that the same reasoning can be applied to problems with such constraints.

From now on, without loss of generality we assume that every extremal that is mentioned below is normal, that is, by definition, $p^0 = -1$ (as mentioned earlier, this is a very mild requirement, see, e.g., \cite{ChitourJeanEtAl2008}). Assume that a time-discretized version of Algorithm \ref{algo_SCP} converges. In particular, due to the arguments in Section \ref{sec_guarantees}, we can assume that the convergence result stated in Theorem \ref{theo_main} applies to the sequence of KKT multipliers related to the time discretization of each convex subproblem LOCP$^{\Delta}_k$. For every $k \ge 1$, the KKT multiplier $\gamma^0_k$ that is related to the initial condition $x(0) = x^0$ approximates the initial value $p_k(0)$ of the extremal related to LOCP$^{\Delta}_k$ (see, e.g., \cite{GollmannKernEtAl2009}). Therefore, Theorem \ref{theo_main} implies that up to some subsequence, for every $\delta > 0$ there exists a $k_{\delta} \ge 1$ such that for every $k \ge k_{\delta}$, it holds that $\| p(0) - \gamma^0_k \| < \delta$, where $p$ comes from an extremal related to OCP. In particular, select $\delta > 0$ to be the radius of convergence of an indirect shooting method that we use to solve OCP (a rigorous notion of radius of convergence of an indirect shooting method may be inferred from the arguments in \cite{betts1993}). Any such indirect shooting method is then able to achieve convergence if initialized with $\gamma^0_k$, for $k \ge k_{\delta}$. In other words, we may stop SCP at iteration $k_{\delta}$ and successfully initialize an indirect shooting method related to the original (penalized) formulation with $\gamma^0_{k_{\delta}}$ to find a locally-optimal solution before SCP achieves full convergence, drastically reducing the number of SCP iterations used. Since in practice we do not have any knowledge of $\delta > 0$ and indirect shooting methods report convergence failures quickly, we can just run an indirect shooting method after every SCP iteration and stop whenever the latter converges (eventual convergence is ensured by the argument above). This acceleration procedure is summarized in Algorithm \ref{algo_SCP_acc}. Details concerning the implementation of indirect shooting methods are provided in the next section.

\SetKwInOut{Input}{Input}
\SetKwInOut{Output}{Output}
\SetKwInOut{Data}{Data}

\begin{algorithm}
\caption{Accelerated SCP} \label{algo_SCP_acc}
\Input{Guess trajectory $x_0$ and control $u_0$.}
\Output{Solution to OCP.}
\Data{Initial trust-region radius $\Delta_0 > 0$.}

\Begin
{
    $k = 0$, $\Delta_{k+1} = \Delta_k$, $\texttt{flag}=0$\\
    \While{($(u_k)_{k \in \mathbb{N}}$ \textnormal{has not converged}) \textnormal{or $\texttt{flag}=0$}}
    {
        Solve LOCP$^{\Delta}_{k+1}$ for \revThird{$(x_{k+1},u_{k+1})$}\\
        
        Solve an indirect shooting method on OCP to \revThird{$(x_{k+1},u_{k+1})$} initialized with the multiplier related to the constraint $x(0) = x^0$, and if successful, put $\texttt{flag}=1$ \\
        
        \revThird{$\Delta_{k+1} = \textbf{UpdateRule}(x_{k+1},u_{k+1},x_k,u_k)$}
        %according to Theorem \ref{theo_main}
        % Calculate accuracy ratio $\rho_k$ as in (\ref{eq_ratio})\\
        
        % \If{$\rho_k > \rho_b$}
        % {
        %     Reject solution $(t^{k+1}_f,x_{k+1},u_{k+1})$\\
            
        %     $\omega_{k+1} \gets \omega_k$ , \
        %     $\Delta_{k+1} \gets \beta_{\textrm{fail}}\Delta_k$ \\
        % }
        
        % \Else
        % {
        %     Accept solution $(t^{k+1}_f,x_{k+1},u_{k+1})$\\
            
        %     \begingroup
        %     \small
        %     $\omega_{k+1} \gets \begin{cases} \omega_0 & c_{k+1}(\cdot,x_{k+1}(\cdot)) \le \varepsilon\\
        %     \gamma_{fail} \omega_k & c_{k+1}(\cdot,x_{k+1}(\cdot)) > \varepsilon \end{cases}$\\
        %     $\Delta_{k+1} \gets \begin{cases} \min\{\beta_{\textrm{succ}} \Delta_k , \Delta_0\} & \rho_k<\rho_a\\ \Delta_k & \rho_k \geq \rho_a \end{cases}$\\
        %     \endgroup
        % }

        $k \gets k+1$\\

    }
    
    \Return{\revThird{$(x_{k-1},u_{k-1})$}}
}
\end{algorithm}

\section{Implementation and Numerical Experiments} \label{sec_numerics}

% 2) interesting problem

% 3) Indirect is new: either we gain or better!

% %%%%%%%%%%%%%%%
% %%%%%%%%%%%%%%%
% %%%% QUAD
% \subsection{Quadrotor}
% In this example the goal is to compute a trajectory for a quadcopter to fly from one location to another through an obstacle-cluttered environment in a fixed amount of time. The quadcopter is modeled as a point mass. Using a simple double-integrator model, the continuous-time equations of motion are expressed in an inertial reference frame as
% %
% \begin{equation}
% \ddot{r}(t) = u(t) - g e_3,
% \label{eq:ex_quad_dynamics}
% \end{equation}

% where $r(t)\in\mathbb{R}^3$ denotes the inertial position, $u(t)\in\mathbb{R}^3$ is the commanded acceleration (i.e., the control input), $g=9.81~m/s^2$ is the (constant) acceleration due to gravity, and $e_3 = (0,0,1)\in\mathbb{R}^3$. Boundary conditions on the position, velocity, and commanded acceleration are imposed to ensure that the vehicle begins and ends in a hovering maneuver at the desired end points (in this example, the final time $t_f$ is fixed): 
% %
% \begin{subequations}\label{eq:ex_quad_bcs}
% \begin{align}
% r(t_0) = r_0, \quad \dot{r}(t_0) = 0, \\
% r(t_f) = r_f, \quad \dot{r}(t_f) = 0, 
% \end{align}
% \end{subequations}
% %%%%%%%%%%%%%%%
% %%%%%%%%%%%%%%%
% %%%% QUAD

Next, we perform numerical experiments for the optimal control of a nonlinear system subject to obstacle-avoidance constraints \revThird{and provide implementation details. In particular, we}  demonstrate the performance of SCP and the gains from \revThird{the indirect shooting method}  acceleration procedure \revThird{in Alg. \ref{algo_SCP_acc}}. 
Albeit we present results for a fixed final time problem in this section, SCP can be applied to free final time OCPs with minor modifications; we refer to \url{https://arxiv.org/abs/2009.05038v3} for details and results.

\subsubsection{Problem formulation}
We consider %the problem of computing locally optimal trajectories for 
a 3-dimensional non-holonomic Dubins car, with state $x = [r_x,r_y,\theta]\in\mathbb{R}^3$ and control input \revThird{$u =[v,\psi]\in\mathbb{R}^2$}. The dynamics are $\dot{x} \,{=}\, [v\cos(\theta), v\sin(\theta),k\revThird{\psi}]$, where \revThird{$k=0.1$ is the constant turning curvature. }
Starting from $x^0$, the objective of the problem is to reach the state $x_f$ while minimizing control effort $\int_0^{t_f}\revThird{(v(s)^2+\psi(s)^2)}\textrm{d}s$ and avoiding obstacles. Control bounds are set to \revThird{$U=[0,\bar{v}]\times[-\bar{\psi},\bar{\psi}]$ with $(\bar{v},\bar{\psi})=(0.5,1)$}. 
\revFourth{Note that this problem satisfies $(A_2)$, since $G(u(s))=v(s)^2+\psi(s)^2$ is $u$-strongly convex.} %and $U$ can be written as the intersection in \eqref{eq:A2:U:intersection} with the two convex maps $d_1(u)=(v-\frac{\bar{v}}{2})^2-\frac{\bar{v}^2}{4}$ and $d_2(u)=\psi^2-\bar{\psi}^2$.
We consider problems with both fixed and free final angle $\theta(t_f)$. 
We 
consider $n_{\textrm{obs}}$ cylindrical obstacles of radius $\varepsilon_i$ centered at point $r_i\in\mathbb{R}^2$. For each obstacle, we set up an obstacle avoidance constraint using the smooth %convex 
potential function $c_i:\mathbb{R}^2\rightarrow\mathbb{R}$, defined as
% \vspace{-3mm}
\begin{align}\label{eq:obs:potential}
c_i(r) = \begin{cases}
(\| r - r_i \|^2 - \varepsilon^2_i)^2, \quad   
    &\textrm{if }\ \| r - r_i \| < \varepsilon_i 
    \\
0,\quad
    & \mathrm{otherwise}
\end{cases}
,
\end{align}
where $r=[r_x,r_y]$. 
To incorporate these constraints within our problem formulation, we penalize them within the cost function and define OCP to minimize  $\int_0^{t_f}\big( \revThird{v(s)^2+\psi(s)^2}+\omega\sum_{i=1}^{n_{\textrm{obs}}}c_i(r) \big)\textrm{d}s$ \revThird{with $\omega=100$}, which is convex in $(r,u)$ and continuously differentiable. 
\revThird{Penalizing obstacle avoidance violations with this value for $\omega$ is sufficient to guarantee constraint satisfaction for the scenarios considered in the experiments.} 
This yields the following problem:
\begin{equation}\label{prob_dubins_free_tf_orig}
\quad
\begin{cases}
\underset{u }{\min} \ 
 \displaystyle \int^{ t_f}_0 \big( \revThird{v(s)^2+\psi(s)^2}+\omega\sum_{i}c_i(r(s)) \big) \ \mathrm{d}s 
 \\
 \dot{r}_x(s) = v\revThird{(s)} \cos \theta(s), 
 \quad
 \dot{r}_y(s) = v\revThird{(s)} \sin \theta(s), 
 \\
 \dot{\theta}(s) =  k \revThird{\psi}(s),
 \quad
 x(0) = x^0 , \quad  x(t_f) = x_f .
\end{cases}
\end{equation}

\subsubsection{Indirect shooting method} 
As described in Section \ref{sec_acceleratingProc} and Algorithm \ref{algo_SCP_acc}, 
the solution at each SCP iteration can be used to initialize an indirect shooting method for \eqref{prob_dubins_free_tf_orig}.  
Accordingly, we next derive the associated two-point boundary value  problem using the necessary conditions for optimality of the PMP. 
Assuming $p^0\,{=}\,{-}1$ (see Section \ref{sec_acceleratingProc}), the Hamiltonian
$
H_w(s,x,p,p^0,u) = p^{\top} f(s,x,u) + p^0f_w^0(s,x,u)
$ 
with $p\,{=}\,[p_x,p_y,p_{\theta}]$
is expressed as
\begin{align*}
H_w(x,u,p) 
&= v(p_x\cos\theta+p_y\sin\theta) +k \revThird{\psi} p_{\theta} 
\\
&\quad\ \  - \big(\revThird{v^2+\psi^2}+\omega\sum_{i=1}^{n_{\textrm{obs}}}c_i(r) \big)
.
\end{align*}
Applying the adjoint equation and the maximality condition of the PMP (Theorem \ref{theo_PMP}), we obtain the following relations: 
\begin{gather}
\label{eq:PMP_pdot}
\begin{split}
\dot{p}_x = &\ \omega\frac{\partial\big(\sum c_i(r)\big)}{\partial r_x} , \ \ 
\dot{p}_y = \omega\frac{\partial\big(\sum c_i(r)\big)}{\partial r_y}, 
\\
&\ \ \dot{p}_{\theta} = v(p_x \sin \theta - p_y \cos \theta), 
\ \  \ \text{and}
\end{split}
\end{gather}
% and 
$$
\revThird{v= \varphi_1(x,p) = \begin{cases}
\bar{v} & \text{if }\scalebox{0.925}{$\frac{(p_x\cos\theta+p_y\sin\theta)}{2}$} \ge \bar{v},
\\
\scalebox{0.925}{$\frac{(p_x\cos\theta+p_y\sin\theta)}{2}$} & \text{if }\scalebox{0.925}{$\frac{(p_x\cos\theta+p_y\sin\theta)}{2}$} \in (0,\bar{v}),
\\
0 & \text{if }\scalebox{0.925}{$\frac{(p_x\cos\theta+p_y\sin\theta)}{2}$} \le 0,
\end{cases}
}
$$
$$
\revThird{\psi= \varphi_2}(x,p) = \begin{cases}
\revThird{\bar{\psi}} & \text{if }\frac{p_{\theta} k}{2} \ge \revThird{\bar{\psi}},
\\
\frac{p_{\theta} k}{2} & \text{if }\frac{p_{\theta} k}{2} \in (-\revThird{\bar{\psi}},\revThird{\bar{\psi}}),
\\
-\revThird{\bar{\psi}} & \text{if }\frac{p_{\theta} k}{2} \le \revThird{\bar{\psi}}.
\end{cases}
$$
Further, the transversality conditions of the PMP for both problems with fixed and free final angle $\theta_f$ are shown in Figure \ref{fig:dubins:transversality}. 
\begin{figure}[!t]
% \centering
\begin{center}
% \setlength\extrarowheight{-50pt}
% \renewcommand{\tabcolsep}{2mm}
% \renewcommand{\arraystretch}{0.1}
% \begin{tabular}{|p{7mm}|p{34mm}|p{34mm}|}
\begin{tabular}{|c|c|}
    \hline 
    \Tstrut 
    $\theta(t_f)$ fixed & 
    $\theta(t_f)$ free
    \Bstrut 
    \\
    \hline 
    \parbox{34mm}{
    \vspace{-0.5\baselineskip}
        \begin{subequations}
        \begin{align}
        \displaystyle
            \theta(t_f) - \theta_f = 0 
            \\
            H_{\omega}^*(t_f)=0
        \end{align}
        \end{subequations}
    \vspace{-1\baselineskip}
    }
    & 
    \parbox{34mm}{
    \vspace{-0.5\baselineskip}
        \begin{subequations}
        \begin{align}
            p_{\theta}(t_f) = 0
            \\
            H_{\omega}^*(t_f)=0
        \end{align}
        \end{subequations}
    \vspace{-1\baselineskip}
    }
    % \Bstrut 
    \\ 
    \hline 
\end{tabular}
\caption{Transversality conditions of the PMP for Dubins car, depending on whether or not the final angle is free. $H_{\omega}^*(t_f)$ denotes $H_{\omega}(x(t_f),\varphi(x(t_f),p(t_f)),p(t_f))$.}
\label{fig:dubins:transversality}
\vspace{-7mm}
\end{center}
\end{figure}
% \renewcommand{\arraystretch}{1}
%%%%%%%%%%%%%%%%%%%%%%%%
% ONLY FREE TF
%%%%%%%%%%%%%%%%%%%%%%%%
%
%
% 
% 
% \begin{align*}
%     &\dot{x} = [v\cos\theta,v\sin\theta,k\phi(x,p)], 
%     \ \
%     &&\dot{p} = \eqref{eq:PMP_pdot},
%     \\
%     &x(0) = x^0, 
%     \ \ 
%     &&p(0) = p^0.
% \end{align*}
Based on these conditions, we define the shooting function $F: \mathbb{R}^3\times\mathbb{R}^3\rightarrow\mathbb{R}^{\revThird{3}}$ as:
\begin{align*}
    F_1(x(t_f),p(t_f)) &= r_x(t_f)-r_{x,f}
    \\
    F_2(x(t_f),p(t_f)) &= r_y(t_f)-r_{y,f}
    \\
    F_3(x(t_f),p(t_f)) &= \begin{cases}
        \theta(t_f)-\theta_{f}, & \text{if }\, \theta(t_f) \text{ fixed}
        \\
       p_{\theta}(t_f), & \text{if }\, \theta(t_f) \text{ free}
    \end{cases}
    .
\end{align*}
The PMP states that $F_i(x(t_f),p(t_f))\,{=}\,0$ for all $i\,{=}\,1,2,3$ for any locally-optimal trajectory. Thus, based on the conditions of the PMP, we set the following root-finding problem:
\begin{align*}
\textrm{Find}\ \revThird{p^0} \ \, \text{s.t.}\ \ &F_i(x(t_f),p(t_f))\,{=}\,0, \quad  i\,{=}\,1,2,3, 
\\
& \dot{x} = [\revThird{\varphi_1(x,p)}\cos\theta,\revThird{\varphi_1(x,p)}\sin\theta,k\revThird{\varphi_2}(x,p)], 
\\
& \dot{p} = \eqref{eq:PMP_pdot},
\ p(0) = p^0, 
\ x(0) = x^0.
\end{align*}
Given \revThird{$(x^0,p^0)$}, we obtain $x(t_f)$ and $p(t_f)$ by numerical integration of the dynamics and the adjoint equation. Then, given an initial guess, this problem can be solved using off-the-shelf root-finding algorithms, e.g., Newton's method. In this work, we use a fourth-order Runge-Kutta integration scheme to integrate differential equations and use the default trust-region method from the Julia \url{NLsolve.jl} package \cite{NLsolve2020} as the root-finding algorithm. 

As discussed in Section \ref{sec_acceleratingProc}, the success of solving this two-point boundary value problem is highly sensitive to the initial guess for \revThird{$p^0$}. 
To address this issue, we leverage the insights from Theorem \ref{theo_main}. Given a solution to LOCPP${}_{k+1}^\Delta$ strictly satisfying the trust-region constraints, we retrieve the KKT multiplier $\gamma_{k+1}^0$ associated with the initial condition $x(0)\,{=}\,x^0$. As discussed in Section \ref{sec_acceleratingProc}, $\gamma_{k+1}^0$ approaches $p^0$ as SCP converges to a locally-optimal trajectory. Thus, as described in Algorithm \ref{algo_SCP_acc}, we initialize the root-finding algorithm with \revThird{$\smash{\gamma_{k+1}^0}$} stemming from the solution of LOCPP${}_{k+1}^\Delta$. If a solution \revThird{$p^0$} to the root-finding problem is found, the corresponding candidate locally-optimal trajectory $(x,u\,{=}\,\varphi(x,p))$ has been found and Algorithm \ref{algo_SCP_acc} terminates. 

\subsubsection{Implementation details} 

To apply Algorithm \ref{algo_SCP} and \ref{algo_SCP_acc}, we start from \revThird{$\Delta_0=50$ and} we let $\Delta_{k+1}\leftarrow \revThird{0.5}\Delta_k$ to satisfy the assumptions of Theorem \ref{theo_main}. Note that different update rules are also possible \cite{BonalliCauligiEtAl2019}. 
We initialize SCP with a straight-line trajectory from $x^0$ to $x^f$, initialize all controls \revThird{inputs with $v(s)=\|r_f-r_0\|/t_f$ and $\psi(s)=0$ for $s\in[0,t_f]$}, and use a trapezoidal discretization scheme with  $N=51$ nodes. To check convergence of SCP, as a stopping criterion, we verify that $\int^{t_f}_0 \| u_{k+1}-u_k \|^2(s)\; \mathrm{d}s \leq 10^{-\revThird{2}}$. %\revThird{(i.e., although only weak convergence is ensured by Theorem \ref{theo_main}, we stop as soon as we find a sequence of controls $(u_k)_{k \in \mathbb{N}}$ which \textit{strongly converges} in $L^2$, which usually fits the practical setting well \cite{BonalliCauligiEtAl2019,SzmukAcikmese2018}.)}. %We also check that the trust-region constraints are strictly satisfied at convergence, i.e., \revThird{that \eqref{eq_trust} strictly holds,} and 
We solve each convexified problem using IPOPT. We release our implementation at \href{https://github.com/StanfordASL/jlSCP}{https://github.com/StanfordASL/jlSCP}. %\footnote{Our implementation is available at \href{https://github.com/StanfordASL/jlGuSTO}{https://github.com/StanfordASL/jlGuSTO}.}. 
\begin{rmk}
In this setting, 
the assumptions of Theorem \ref{theo_main} (i.e., $p_k^0\neq 0$ for any $k\geq 1$ and $(p(t_f),p^0)\neq 0$) are automatically satisfied. Indeed, at each $k$-th SCP iteration, if IPOPT converges, then the critical point it finds  satisfies the KKT conditions, so that $p_k^0=-1\neq 0$ (up to numerical errors, see \cite{GollmannKernEtAl2009}). Moreover, if the shooting method (as described above) converges, then it automatically finds a solution with $p^0=-1$, so that $(p(t_f),p^0)\neq 0$.
\end{rmk}

\subsubsection{Results and discussion}\label{sec:results_discussion}

We evaluate our method in 100 randomized experiments. Denoting $\Unif(a,b)$ as the uniform probability distribution from $a\in\mathbb{R}$ to $b\in\mathbb{R}$, we set
\vspace{-3mm}

{\small
\begin{align*}
    &
    r_{x}^0\sim\Unif(-1,1),
    \  r_{y}^0\sim\Unif(-1,1),
    \ \ \theta^0\sim\Unif(-\pi,\pi),
    \\
    & 
    \smash{\tilde{\theta}_{xy}\sim\Unif(\theta^0{-}\frac{\pi}{4},\theta^0{+}\frac{\pi}{4}),
    \qquad\qquad\ \ 
    \theta^f\sim\Unif(\theta^0{-}\frac{\pi}{4},\theta^0{+}\frac{\pi}{4})},
    \\
    & 
    \smash{r_{x}^f\,{\sim}\, r_{x}^0 {+} (\revThird{5}{+}\Unif(0,3))\cos\tilde{\theta}_{xy},
    \ 
    r_{y}^f\,{\sim}\, r_{y}^0 {+} (\revThird{5}{+}\Unif(0,3))\sin\tilde{\theta}_{xy}},
    \\
    & 
    \epsilon_i{=}0.4, 
    \, n_{\textrm{obs}}{=}2,
    \, 
    \smash{r_{i,x}\,{\sim}\, 
    \Unif(\min(r_{x}^0,r_{x}^f){+}8\epsilon_i,
    \max(r_{x}^0,r_{x}^f){-}8\epsilon_i),}
\end{align*}
}%
% $r_{x}^0\sim\Unif(-1,1)$, 
% $r_{y}^0\sim\Unif(-1,1)$, $\theta^0\sim\Unif(-\pi,\pi)$. 
% Further, we set $\tilde{\theta}_{xy}\sim\Unif(\theta^0-\pi/4,\theta^0+\pi/4)$, set
% $r_{x}^f\sim r_{x}^0 + (4+\Unif(0,3))\cos(\tilde{\theta}_{xy})$, set
% $r_{y}^f\sim r_{y}^0 + (4+\Unif(0,3))\sin(\tilde{\theta}_{xy})$, and set $\theta^f\sim\Unif(\theta^0-\pi/4,\theta^0+\pi/4)$ \andrew{Maybe just make this an aligned list of equations}. 
and similarly for $r_{i,y}$.

\begin{figure}[t]%[!htb]
\centering
    \begin{minipage}{.65\linewidth}
        \centering
        {    
        \renewcommand{\arraystretch}{1.5}
        \small
        % \begin{tabular}{|m{11mm}|m{15mm}|m{15mm}|}
        \begin{tabular}{|m{10.5mm}|c|c|}
            \hline 
             & 
            \multicolumn{2}{c|}{Avg. num. of SCP iterations}
            \\
            \cline{2-3}
            Problem &
            \hspace{-1mm}
            \begin{tabular}{m{14mm}}
            SCP only
            \end{tabular}
            \hspace{-4mm}
            &
            \hspace{-5.0mm}
            % \vspace{1mm}
            \begin{tabular}{m{14mm}}
            SCP+shooting
            % \vspace{1mm}
            \end{tabular}
            \hspace{-1.0mm}
            \\
        % \end{tabular}
        % \begin{tabular}{|m{10.5mm}|c|c|}
            % \hline 
            % Problem &
            % \hspace{-2.5mm}
            % \begin{tabular}{m{15mm}}
            % $\#$ it.: SCP only
            % \end{tabular}
            % \hspace{-2.5mm}
            % &
            % \hspace{-2.0mm}
            % \begin{tabular}{m{15mm}}
            % $\#$ it.: SCP + shooting
            % \end{tabular}
            % \hspace{-2.0mm}
            % \\
            \hline 
            % free $t_f$, 
            free $\theta_f$
                & \revThird{$4.12$}
                & \revThird{$3.54$}
            \\ 
            \hline 
            % free $t_f$, 
            fixed $\theta_f$
                & \revThird{$4.52$}
                & \revThird{$3.78$}
            \\ 
            \hline 
        \end{tabular}
            % }
        }%
    \end{minipage}%
    \caption{Number of SCP iterations until convergence averaged over $100$ randomized experiments as described in Section \ref{sec:results_discussion}. }
    \label{fig:results_numbers}
% \end{figure}
%
%
%
% \begin{figure}[t]%[!htb]
    \begin{minipage}{.48\linewidth}
        \centering
        \includegraphics[width=1\linewidth,trim=30 0 40 0, clip]{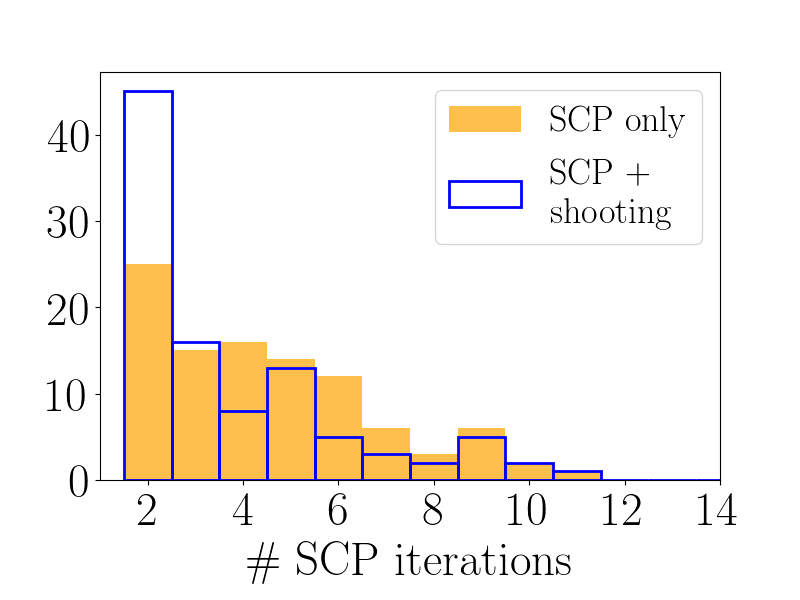}
    \end{minipage}%
    \hspace{1mm}
    \begin{minipage}{.48\linewidth}
        \centering
        \includegraphics[width=1\linewidth,trim=30 0 40 0, clip]{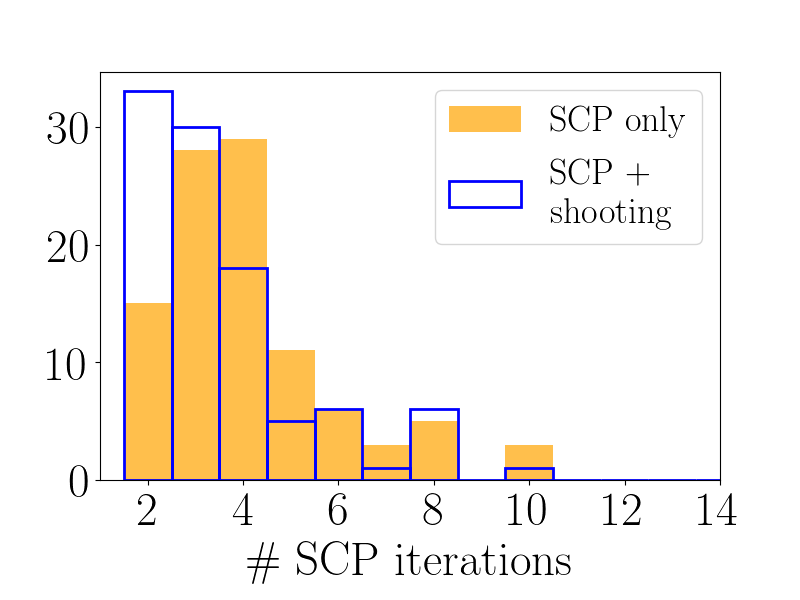}
    \end{minipage}%
    \caption{Results from randomized problems with %free final time, 
    fixed final angle (left) and free final angle (right). These histograms show the number of SCP iterations until convergence for SCP only (orange, Algorithm \ref{algo_SCP}) and for shooting-accelerated SCP (blue, Algorithm \ref{algo_SCP_acc}).}
    \label{fig:histos}
\end{figure}

\begin{figure}[t]%[!htb]
    \begin{minipage}{.37\linewidth}
        \centering
        \includegraphics[width=1\linewidth,trim=0 0 0 0, clip]{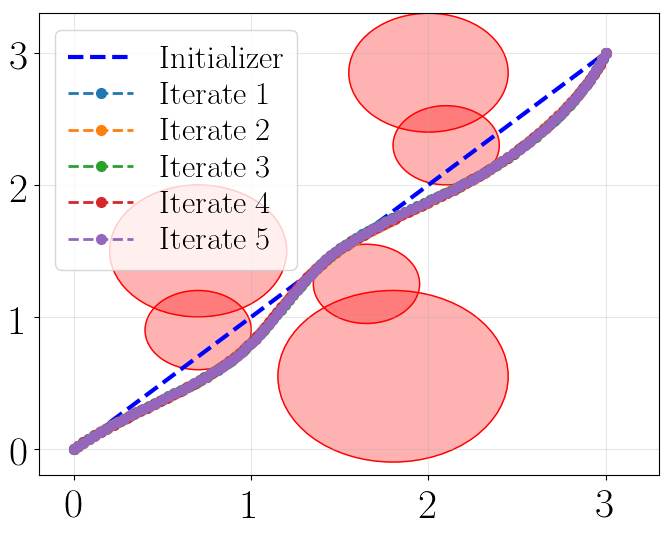}
    \end{minipage}%
    \begin{minipage}{.62\linewidth}
        \centering
        \includegraphics[width=1\linewidth,trim=0 0 0 0, clip]{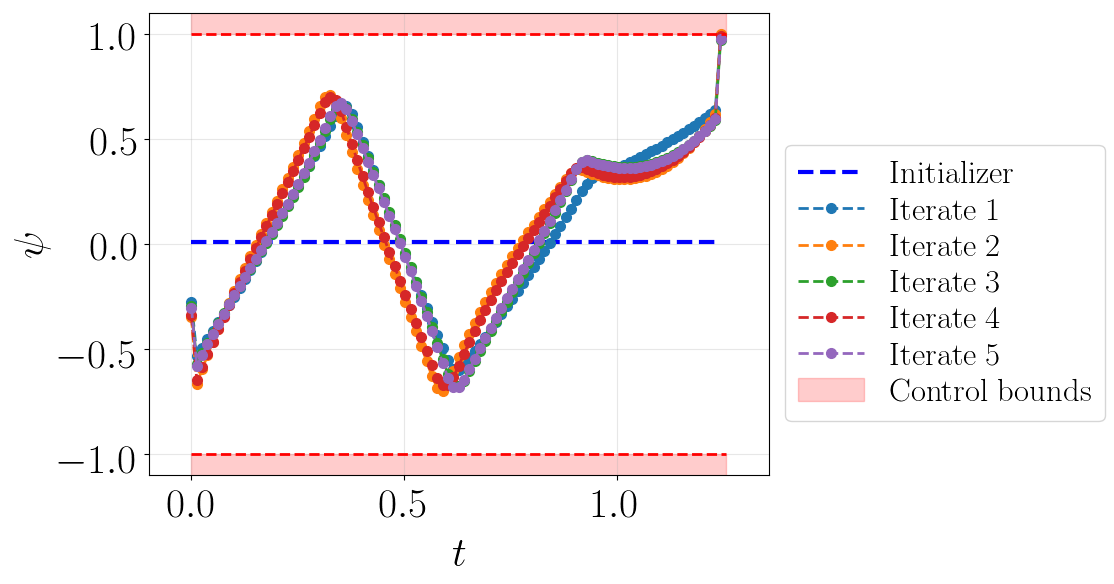}
    \end{minipage}%
    \caption{Example of trajectory using an infeasible straight-line initialization that passes through obstacles (left) and associated controls (right).}
    \label{fig:results_trajs}
\end{figure}

% results discussion
We consider the problems with both free and fixed final angle $\theta^f$. In 100\% of these scenarios, both SCP and the shooting-accelerated SCP converge successfully. 
% comments about viz - initialization
Figure \ref{fig:results_trajs} shows that the initialization does not need to be feasible for SCP to converge successfully to a (candidate) locally-optimal trajectory avoiding obstacles and satisfying input constraints. 
% comments about dynamics / manifolds
Further, although the solution of the first iteration of SCP does not respect the nonlinear dynamics constraints, those become satisfied as the algorithm performs further iterations. 

Results in Figures \ref{fig:results_numbers} and \ref{fig:histos} demonstrate that leveraging the PMP through an indirect shooting method decreases the number of SCP iterations on average, significantly accelerating the algorithm. 
Indeed, SCP alone may require multiple iterations close to the optimal solution  before convergence. 
% shooting
In contrast, once a good guess for \revThird{$p^0$} to initialize the root-finding algorithm is available, the indirect shooting method is capable of efficiently computing a (candidate) locally-optimal trajectory solving OCP. 
% This trend is confirmed in
% Figure \ref{fig:histos}, showing that the shooting method consistently reduces the number of SCP iterations.  
% worst case
In the worst case where the number of SCP iterations until convergence $N_{\textrm{SCP}}$ is the same for both methods, 
which occurs if the guess for \revThird{$p^0$} is never within the radius of convergence of the shooting method at any SCP iteration, % with and without indirect shooting methods, 
the computation time for Algorithm \ref{algo_SCP_acc} is $N_{\textrm{SCP}}\cdot(T_{\textrm{SCP}}+T_{\textrm{s-fail}})$, with $T_{\textrm{SCP}}$ being the time to convexify OCP and solve the resulting LOCP$^{\Delta}_{k+1}$, and $T_{\textrm{s-fail}}$ being the time for the root-finding algorithm to report convergence failure. \revFourth{In our non-optimized Julia implementation, \scalebox{0.95}{$T_{\textrm{s-fail}}=\revThird{25}\textrm{ms}$} and \scalebox{0.95}{$T_{\textrm{SCP}}=\revThird{242}\textrm{ms}$} on average, measured on a laptop equipped with a 2.60GHz Intel Core i7-6700 CPU with 8GB of RAM.} As $T_{\textrm{s-fail}}\,{\ll}\, T_{\textrm{SCP}}$ \revFourth{(see also computation times in \cite{Bonalli2018})}, 
there is little computational overhead in using accelerated-SCP over SCP only, and results in Figures \ref{fig:results_numbers} and \ref{fig:histos} demonstrate that leveraging the PMP significantly accelerates the optimization process. 
Finally, as \revThird{$p_k^0\neq 0$ holds at each SCP iteration} in $100\%$ of these scenarios \revThird{which we approximately check using the Lagrange multipliers $\gamma_k^0\approx p_k^0$}, from Theorem \ref{theo_main}, all trajectories are candidate locally-optimal solutions to OCPP${}_\omega$.

\section{Conclusion and Perspectives} \label{sec_conclusion}

% Manca: manifold constraint ``effortlessly''

In this paper, we analyze the convergence of SCP when applied to continuous-time non-convex optimal control problems, including  in the presence of manifold-type constraints. In particular, we prove that, up to some subsequence, %SCP
SCP-based optimal control methods converge to a candidate locally-optimal solution for the original formulation. Under mild assumptions, our approach can be effortlessly leveraged to solve problems with manifold-type constraints. Finally, we leverage our analysis to accelerate the convergence of standard SCP-type schemes through indirect methods, and we investigate their performance via numerical simulations on a trajectory optimization problem with obstacles.

For future research, we plan to extend our approach to more general optimal control formulations, which for instance consider stochastic dynamics, risk functionals as costs, and probabilistic chance constraints. In addition, we plan to investigate particular parameters update rules which guarantee the convergence of the whole sequence of controls $(u_k)$ \revThird{(compare with Theorem \ref{theo_main} item 2)}. 
Finally, we plan to test the performance of our approach by means of hardware experiments on complex systems such as free-flyers and robotic manipulators.

\section*{Acknowledgment}

We thank Andrew Bylard, \revThird{Matthew Tsao}, and the anonymous referees for their careful review of this manuscript.

% \vspace{8mm}
\bibliographystyle{IEEEtran}
% \bibliography{IEEEabrv,IEEEexample}
\bibliography{ASL_papers,main}

\section{Appendix}

\begin{proof}[Proof of Lemma \ref{lemma_boundness}]
Fix $k \in \mathbb{N}$. For every $t \in \mathbb{R}_+$ where $x_{k+1}$ is defined, we may compute
{\scriptsize
\begin{align*}
    &\| x_{k+1}(t) \| \le \| x^0 \| + \left\| \int^t_0 \left( f_0(s,x_k(s)) + \sum^m_{i=1} u^i_{k+1}(s) f_i(s,x_k(s)) \right) \; \mathrm{d}s \right\| \\
    &\quad + \left\| \int^t_0 \bigg( \frac{\partial f_0}{\partial x}(s,x_k(s)) + \sum^m_{i=1} u^i_k(s) \frac{\partial f_i}{\partial x}(s,x_k(s)) \bigg) x_{k+1}(s) \; \mathrm{d}s \right\| \\
    &\quad + \left\| \int^t_0 \bigg( \frac{\partial f_0}{\partial x}(s,x_k(s)) + \sum^m_{i=1} u^i_k(s) \frac{\partial f_i}{\partial x}(s,x_k(s)) \bigg) x_k(s) \; \mathrm{d}s \right\| \\
    &\le \revThird{C_1} \left( 1 + \int^t_0 \| x_{k+1}(s) \| \; \mathrm{d}s \right) \\
    &\quad + \revThird{C_2} \sum^m_{i=0} \int_{\Big\{ s \in [0,t^{k+1}_f] : \ (s,x_k(s)) \in \textnormal{supp} f_i \Big\}} \left\| \frac{\partial f_i}{\partial x}(s,x_k(s)) \right\| \| x_k(s) \| \; \mathrm{d}s \\
    &\le \revThird{C_3} \left( 1 + \int^t_0 \| x_{k+1}(s) \| \; \mathrm{d}s \right) .
\end{align*}}
Here, $\textnormal{supp} f_i$ denotes the support of $f_i$, $i=0,\dots,m$. \revThird{The constants $C_1 , C_2 , C_3 \ge 0$} \revFourth{depend} on \revThird{$t_f$}, $f_i$, $i=0,\dots,m$, and $U$, and \revFourth{they come} from the compactness of $U$ and of $\textnormal{supp} f_i$ (and therefore of $\textnormal{supp} \frac{\partial f_i}{\partial x}$), $i=0,\dots,m$. By applying the Gr\"onwall inequality, we finally have that \revThird{$\| x_{k+1}(t) \| \le C_3 \exp(C_3 t_f)$ for every $t \in \mathbb{R}_+ \cap [0,t_f]$} where $x_{k+1}$ is defined. The compactness criterion for solution to ODE applies, and we infer that $x_{k+1}$ is defined in the entire interval \revThird{$[0,t_f]$}. Since the constant \revThird{$C_3 > 0$} does not depend on $k \in \mathbb{N}$, the trajectories $x_k$ are uniformly bounded in \revThird{$[0,t_f]$} and in $k \in \mathbb{N}$.
\end{proof}

\begin{IEEEbiography}[{\includegraphics[width=1in,height=1.25in,clip,keepaspectratio]{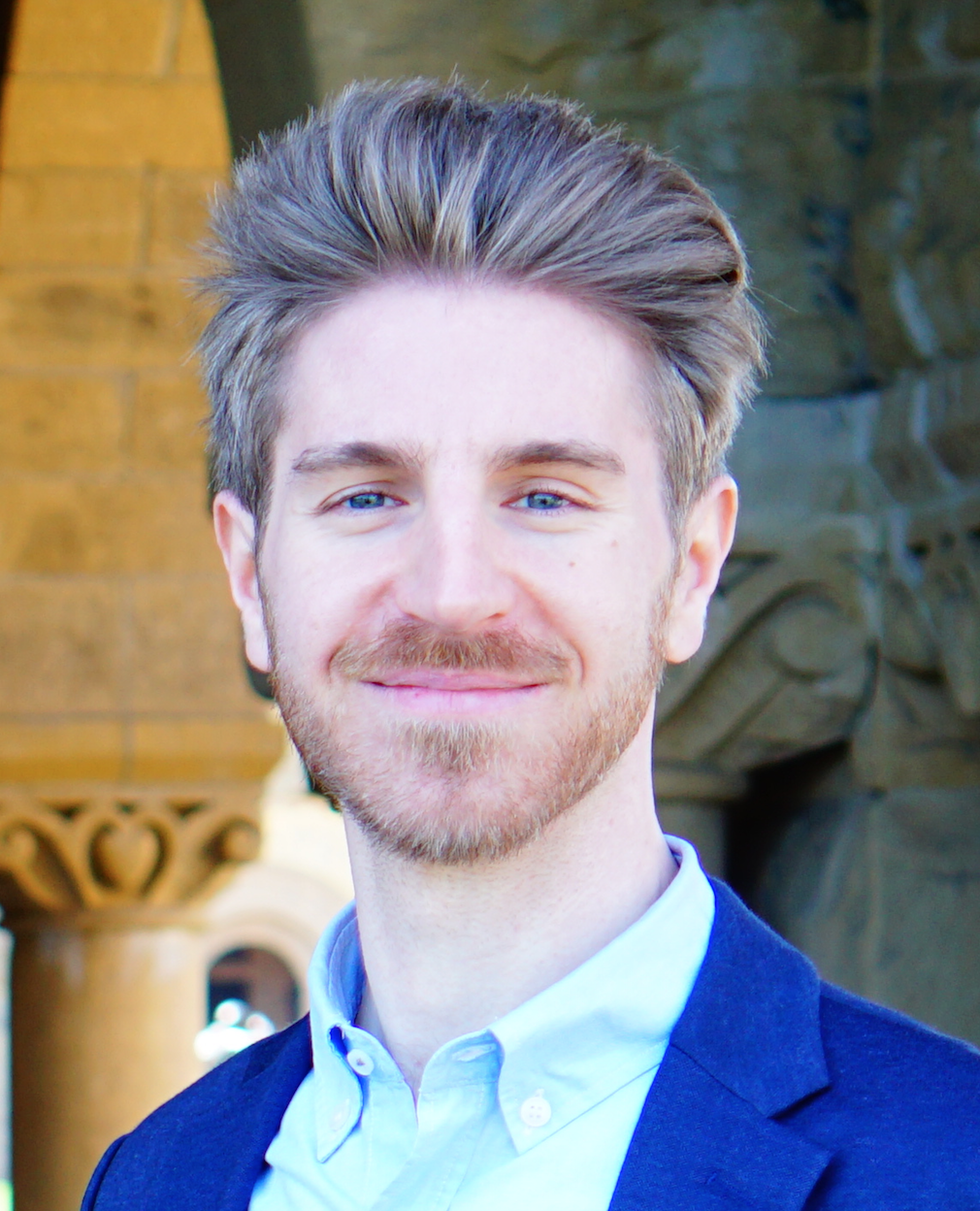}}]{Riccardo Bonalli} 
obtained his M.Sc. in Mathematical Engineering from Politecnico di Milano, Italy in 2014 and his Ph.D. in applied mathematics from Sorbonne Universite, France in 2018 in collaboration with ONERA - The French Aerospace Lab, France. He is a recipient of the ONERA DTIS Best Ph.D. Student Award 2018. He was a postdoctoral researcher with the Department of Aeronautics and Astronautics, Stanford University. Currently, Riccardo is a tenured CNRS researcher with the Laboratory of Signals and Systems (L2S), Universit\'e Paris-Saclay, Centre National de la Recherche Scientifique (CNRS), CentraleSup\'elec, France. His research interests concern theoretical and numerical robust optimal control with techniques from differential geometry, statistical analysis, and machine learning, and applications in aerospace systems and robotics.
\end{IEEEbiography}

\begin{IEEEbiography}[{\includegraphics[width=1in,height=1.25in,clip,keepaspectratio]{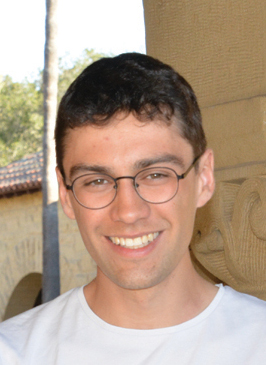}}]{Thomas Lew} received his BSc. degree in Microengineering from Ecole Polytechnique Federale de Lausanne in 2017, received his MSc. degree in Robotics from ETH Zurich in 2019, and is currently pursuing his Ph.D. degree in Aeronautics and Astronautics at Stanford University. His research focuses on the intersection between optimization, control theory, and machine learning techniques for aerospace applications and robotics.
\end{IEEEbiography}

\begin{IEEEbiography}[{\includegraphics[width=1in,height=1.25in,clip,keepaspectratio]{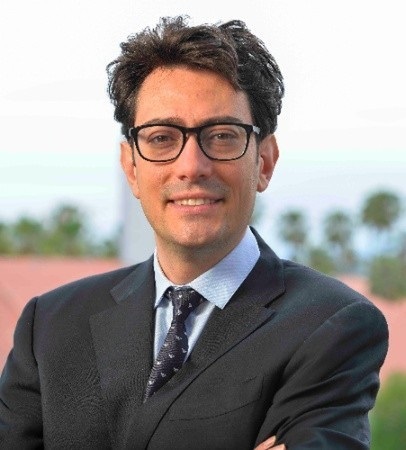}}]{Marco Pavone} 
is   an   Associate Professor of Aeronautics and Astronautics at Stanford University,  where he is the Director of the Autonomous Systems Laboratory. Before  joining  Stanford, he  was  a  Research  Technologist  within the  Robotics  Section  at  the  NASA  Jet Propulsion  Laboratory.   He  received  a Ph.D. degree in Aeronautics and Astronautics from the Massachusetts Institute of  Technology  in  2010.   His  main  research  interests  are  in the  development  of  methodologies  for  the  analysis,  design, and  control  of  autonomous  systems,  with  an  emphasis  on self-driving cars, autonomous aerospace vehicles, and future mobility systems.  He is a recipient of a number of awards, including  a  Presidential  Early  Career  Award  for  Scientists and Engineers, an ONR YIP Award, an NSF CAREER Award, and a NASA Early Career Faculty Award.  He was identified by the American Society for Engineering Education (ASEE) as one of America’s 20 most highly promising investigators under the age of 40.  He is currently serving as an Associate Editor for the IEEE Control Systems Magazine.
\end{IEEEbiography}

\end{document}